\newtheorem{theorem}{Theorem}[section]
\newtheorem{proposition}[theorem]{Proposition}
\newtheorem{lemma}[theorem]{Lemma}
\newtheorem{corollary}[theorem]{Corollary}
\newtheorem{assumption}[theorem]{Assumption}
\newtheorem{definition}[theorem]{Definition}
\theoremstyle{definition}
\newtheorem{example}[theorem]{Example}
\newtheorem{algorithm}[theorem]{Algorithm}
\newtheorem{remark}[theorem]{Remark}
\newcommand{\C}{\ensuremath{\mathbb C}}
\newcommand{\R}{\ensuremath{\mathbb R}}
\newcommand{\N}{\ensuremath{\mathbb N}}
\newcommand{\T}{\ensuremath{\mathbb T}}
\newcommand{\K}{\ensuremath{\mathbb K}}
\newcommand{\cls}{{\text{\upshape cls}}}
\newcommand{\rank}{{\text{\upshape rank}}}
\newcommand{\crk}{{\text{\upshape corank}}}
\newcommand{\vect}{{\text{\upshape vec}}}
\newcommand{\ptdeg}{\preceq_{\text{\upshape tdeg}}}
\newcommand{\lt}{{\text{\upshape lt}_{\prec}}}
\newcommand{\lee}{{\text{\upshape le}}}
\newcommand{\A}{\mathcal{A}}
\newcommand{\Kk}{\mathcal {K}}
\newcommand{\Aa}{\mathcal{A}}
\newcommand{\Bb}{\mathcal{B}}
\newcommand{\Cc}{\mathcal{C}}
\newcommand{\tdeg}{\prec_{\text{\upshape tdeg}}}
\def\rank{\mbox{rank}}
\begin{document}

\title {A Certificate for Semidefinite
Relaxations in Computing Positive Dimensional Real Varieties}


\author{
        Yue  Ma, Chu Wang  and Lihong Zhi}

\footnote{KLMM, Academy of Mathematics and Systems Science, CAS,
Beijing, 100190, China, \\
 \{yma, cwang, lzhi\}@mmrc.iss.ac.cn}

\begin{abstract}
For an ideal I with a positive dimensional real variety, based on
moment relaxations, we study how to compute a  Pommaret basis which
is simultaneously  a Groebner basis of an ideal J generated by  the
kernel of a truncated moment matrix and nesting between I and its
real radical ideal. We provide a certificate consisting of a
condition on coranks of moment matrices for terminating
 the algorithm.  For a generic delta-regular coordinate system,
we prove that the condition is satisfiable in
 a  large enough   order of
moment relaxations.

\end{abstract}

\keywords{Real radical ideal, positive dimensional ideal,
semidefinite programming,  involutive division, Pommaret basis,
$\delta$-regular}

\maketitle

\section{introduction}
Finding real solutions of a polynomial system is a classical
mathematical problem with wide applications. Let $I=\langle h_1,\ldots, h_m \rangle\subseteq\R[x]:=\R[x_1,\ldots,x_n]$ be an ideal
generated by polynomials $h_1,\dots,h_m\in \R[x]$. Its complex and
real algebraic varieties are defined as
\[
V_{\C}(I):=\{x\in\C^n\mid f(x)=0 ~\forall\,f\in I\}, \quad
V_{\R}(I):= V_{\C}(I) \cap \R^n.
\]
The vanishing ideal of a set $V\subseteq\C^n$ is an ideal
\[
I(V):=\{f\in\C[x]\mid f(v)=0, ~\forall~v\in V\}.
\]
The radical (also called complex radical) of $I$ is
\[
\sqrt{I}:=\left\{f\in\C[x] \left| f^k \in I  \right. ~\text{for some
} k \in \N  \right\},
\]
while the real radical of $I$ is defined as
\[
\sqrt[\R]{I}:=\left\{f\in\R[x] \left| f^{2k}+\sum_{i=1}^r q_i^2\in I
\right. ~\text{for some } k \in \N, \, q_1,\ldots, q_r\in\R[x]
\right\}.
\]
Clearly, they satisfy the inclusion $I \subseteq \sqrt{I} \subseteq
\sqrt[\R]{I}$. An ideal $I$ is called \textit{radical} (resp.
\textit{real radical}) if $I= \sqrt{I}$ (resp. $I=\sqrt[\R]{I}$).
According to the Real Nullstellensatz \cite{BCR87}, the vanishing
ideal $I(V_{\R}(I))$ of the zero set $V_{\R}(I)$ is a real radical
ideal and $I(V_{\R}(I))= \sqrt[\R]{I}$.

There exists much work on computing a complex radical ideal
$\sqrt{I}$, like \cite{BW96,EHV92,GTZ88,KL91,Lasserre09}. The
algorithms range from numerical ones (e.g.,
\cite{JMRS2012,LLR09a,LLR09b}) to symbolic ones  (e.g.,
\cite{BW93,Seidenberg74}). 
For the general case of $I$ being
positive dimensional, a commonly used technique is to reduce the
problem to the zero-dimensional case, like in  Gianni et al.
\cite{GTZ88} and Krick and Logar \cite{KL91}.

The problem of computing the real radical ideal $\sqrt[\R]{I}$ is
typically much more difficult than computing $\sqrt{I}$.  Becker and
Neuhaus \cite{BN93} proposed a symbolic algorithm based on the
primary decomposition to compute $\sqrt[\R]{I}$ (also see
\cite{Neuhaus98,Spang:2007,Xia02,zeng99}).
Some interesting 
algorithms based on critical point methods were proposed in
\cite{Aubry02,BGHM01,Basu97,Safey03} to compute a point on each
semi-algebraically connected component of real algebraic varieties.

A new  approach based on moment relaxations has been proposed by
Lasserre et al. \cite{LLMTR11,LLR09a,LLR09b,LR10}  for computing
$\sqrt[\R]{I}$,  provided it is a zero-dimensional variety.  Hereby
we briefly describe this interesting approach.

For a sequence $y=(y_\alpha)_{\alpha\in\N^n}\in\R^{\N^n}$, its
\textit{moment matrix}
\[M(y):=(y_{\alpha+\beta})_{\alpha,\beta\in\N^n}\]
is a real symmetric matrix whose rows and columns are indexed by the
 set $\T^n:=\{x^{\alpha} \mid \alpha \in \N^n\}$ of
monomials.  Given a polynomial $h\in \R[x]$, set
$\vect(h):=(h_{\alpha})_{\alpha\in\N^n}$ and define the sequence
$hy:=M(y)\vect(h)\in \R^{\N^n}$. We say that a polynomial $p$ lies
in the kernel of $M(y)$ when $M(y)p:=M(y)\vect(p)=0$. Given a
truncated moment sequence
$y=(y_\alpha)_{\alpha\in\N^n_{2t}}\in\R^{\N^n_{2t}}$, it defines a
\textit{ truncated moment matrix}
\[M_t(y):=(y_{\alpha+\beta})_{\alpha,\beta\in\N^n_t}\]
indexed by the set
$\T_{ t}^n:=\{x^{\alpha} \mid \alpha \in\N^n_t ~{\text{with}}~
|\alpha|:=\Sigma_{i=1}^n \alpha_i \leq t\}$.

We work with the space $\R[x]_t$ of polynomials of the degree
smaller than or equal to $t$. For a polynomial $p\in \R[x]_t$, if
$M_t(y)\vect(p)=0$, we
 say $ p$ lies in the kernel of $M_t(y)$, i.e.,
\begin{equation}\label{kernal}
    \ker M_t(y):=\{p\in\R[x]_{t}\mid M_t(y)\vect(p)=0 \}.
\end{equation}

Let $I=\langle h_1, \ldots,h_m\rangle \subseteq \R[x]$ be an ideal
and set
\begin{equation}\label{e,4}
d_j:=\lceil\deg(h_j)/2\rceil, \quad d:=\max_{1\leq j\leq m}d_j.
\end{equation}
For $t\geq d$, define the set
\begin{equation}\label{dual space}
    \Kk_t:=\{y\in\R^{\N^n_{2t}}\mid y_0=1, M_t(y)\succeq0, M_{t-d_j}(h_jy)=0,
    j=1,\ldots,m\}.
\end{equation}
An element $y\in\Kk_t$ is \textit{generic} if $M_t(y)$ has maximum
rank over $\Kk_t$. We denote
\begin{equation}\label{genericelement}
\Kk_t^{gen}:=\{ y \in \Kk_t \mid ~\text{rank} \, M_t(y) ~{\rm is ~maximum~
over}~ \Kk_t\}.
\end{equation}

When the real algebraic variety $V_{\R}(I)$ is finite, Lasserre et
al.\cite{LLR08} used the flat extension (a rank condition of moment
matrices in \cite{CF96}) as a certificate to check whether
polynomials in $\ker M_s(y)\,(1\leq s \leq t)$ for a generic element
$y\in \Kk_t$ generates the real radical ideal $I(V_{\R}(I))$. When
$V_{\R}(I)$ is positive dimensional, this certificate does not work.
 The example given by Fialkow in \cite[Example
3.2]{Fialkow11} can be used to explain the difficulty. Unlike the
zero-dimensional case, although the kernel of the moment matrix of
the third order  consists  of only a polynomial $z-x^3$ which is
already  a Gr\"obner basis  of the real radical 
ideal $I=I(V_\R(I))=\langle z-x^3\rangle$, we can not extend the truncated moment
sequence $y \in  \Kk_3$ to the next order, i.e.,
 $y$  has no representing measure.


 The
motivation of this paper is to provide a certificate for checking
$\langle \ker
M_t(y) 
\rangle =I(V_{\R}(I))$ when $V_{\R}(I)$ is positive dimensional.
Unfortunately, we still can not solve this open problem  \cite[\S
2.4.3]{LR10} completely. However, we provide a certificate
(\ref{e,5}) based on the geometric involutivity theory
\cite{Robin:2006,SRWZ09,Seiler2010} 
  for checking  whether we have
obtained a weak 
 Pommaret basis (also a Gr\"obner basis) of an ideal
$J=\langle \ker M_{t-2}(y) \rangle$
%
satisfying $ I\subseteq J \subseteq I(V_{\R}(I))$ 
under
graded reverse lexicographic order.  A (weak) Pommaret basis is  a
special form of the familiar Gr\"obner basis which allows for
directly reading off the depth, the projective dimension and the
Castelnuovo-Mumford regularity of a module. When the real algebraic
variety $V_{\R}(I)$ is positive dimensional, for examples in Section
\ref{examples}, we succeed in showing that the computed Pommaret
basis is an involutive basis of the real radical ideal
$I(V_{\R}(I))$.  In general, it is still not possible to prove that
the kernel of the moment matrix satisfying the certificate
(\ref{e,5}) generates a real radical ideal.

\bigskip
The paper is organized as follows. In Section \ref{preliminary}, we
review some preliminary backgrounds like elementary algebraic
geometry, moment matrices, involutive divisions and involutive
bases. In Section \ref{mainresults}, we  present an algorithm based
on the  semidefinite programming and  moment relaxations in
computing a Pommaret basis of an ideal $J$ 
satisfying $I \subseteq J \subseteq I(V_{\R}(I))$, $V_\R(I)=V_\C(J)
\cap \R^n$, and  propose a certificate for terminating the algorithm
and prove it works for a positive dimensional $V_{\R}(I)$ under a
$\delta$-regular coordinate system. In Section \ref{examples}, we
present computational results for a set of examples in
\cite{Rostalski09,SRWZ09, Seiler02,Stetter04}. 
 Some open
questions and ongoing work are given in Section \ref{conclusions}.

\section{preliminary}\label{preliminary}

We introduce 
some notations and preliminaries about polynomials, matrices,
semidefinite programs and the involution. Given $\K=\R$ or $\C$, the
ring of multivariate polynomials in $n$ variables over the field
$\K$ is denoted by $\K[x]:=\K[x_1,\ldots,x_n]$. For an integer
$t\geq0$, $\K[x]_{t}$ denotes the set of polynomials of degree at
most $t$. $\N$ denotes the set of nonnegative integers and we set
$\N_{t}^n:=\{\alpha\in\N^n\mid |\alpha|:=\Sigma_{i=1}^n \alpha_i
\leq t\}$ for $t\in\N$. For $\alpha\in \N^n$, $x^{\alpha}$ denotes
the monomial $x_1^{\alpha_1}\cdots x_n^{\alpha_n}$ whose total
degree is $|\alpha|:=\Sigma_{i=1}^n \alpha_i$. All monomials are
included in $\T^n:=\{x^{\alpha} \mid \alpha \in \N^n\}$ and $\T_{
t}^n:=\{x^{\alpha} \mid \alpha\in\N_{t}^n\}$ consists of monomials
with degrees bounded by $t\in\N$. Consider a polynomial $p\in\K[x]$,
$p=\Sigma_{\alpha\in\N^n}p_{\alpha}x^{\alpha}$, where there are only
finitely many nonzero $p_{\alpha}\in\K$, its leading term $\lt(p)$
is the maximum term $x^{\alpha}$ with respect to a monomial order
$\prec$ for which $p_{\alpha}\neq 0$. We denote by $\langle
\lt(I)\rangle$ the ideal generated by leading
terms of polynomials in $I$. 
The symbol $[x]_t$ denotes the sequence consisting 
 of all monomials of degrees less than or equal to $ t$:
 \[
[x]_t := [1,  x_1, \cdots, x_n, x_1^2, x_1 x_2, \cdots, x_1^t,
x_1^{t-1}x_2, \cdots, x_n^t].
\]


\subsection{Properties of moment matrix}
The kernel of a moment matrix is particularly useful as it has the
following properties, see
\cite{CF96,LLR08,Laurent05,Laurent09,Moller04}.

\begin{lemma} \cite[Proposition 3.6]{LLR08}
Let $\ker M(y):=\{p\in\R[x]\mid M(y)\vect(p)=0\}$
 be the kernel of a moment matrix $M(y)$. Then
  $\ker M(y)$
 is an ideal in $\R[x]$. Moreover, if $M(y)\succeq0$, then $\ker
M(y)$ is a real radical ideal.
\end{lemma}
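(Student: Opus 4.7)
The plan is to prove both assertions by reducing kernel membership to the vanishing of a single quadratic form $L_y(p^2)$, exploiting the characteristic shift property of moment matrices.

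For the first claim, linearity of the map $p\mapsto M(y)\vect(p)$ gives closure under addition, so the task reduces to closure under multiplication by an arbitrary $h\in\R[x]$. By splitting $h$ into monomials and inducting on degree, this reduces further to closure under multiplication by a single variable $x_i$. I would establish the shift identity
\[
\bigl(M(y)\vect(x_i p)\bigr)_\alpha \;=\; \bigl(M(y)\vect(p)\bigr)_{\alpha+e_i},
\]
obtained by re-indexing the defining sum via $\gamma=\beta-e_i$, and then immediately conclude that $p\in\ker M(y)$ forces $x_i p\in\ker M(y)$.

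For the second claim, I would introduce the Riesz functional $L_y:\R[x]\to\R$ given by $L_y(x^\alpha)=y_\alpha$, which satisfies the compatibility $L_y(pq)=\vect(p)^T M(y)\vect(q)$. Assuming $M(y)\succeq 0$, two facts drive the argument: (a) $p\in\ker M(y)$ if and only if $L_y(p^2)=0$, since a PSD quadratic form vanishes on a vector iff the matrix annihilates that vector (proved e.g.\ by expanding $(v+tw)^T M(y)(v+tw)\geq 0$ in $t$ for arbitrary $w$); and (b) if $p\in\ker M(y)$ then $L_y(hp)=0$ for every $h\in\R[x]$, in particular $L_y(p)=0$. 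Now suppose $f^{2k}+\sum_i q_i^2\in\ker M(y)$. Applying (b) with $h=1$, the sum $L_y(f^{2k})+\sum_i L_y(q_i^2)$ vanishes; each summand is of the form $L_y(r^2)=\vect(r)^T M(y)\vect(r)\geq 0$, so every term vanishes separately, and in particular $L_y((f^k)^2)=0$. Fact (a) then yields $f^k\in\ker M(y)$.

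It remains to descend from $f^k$ to $f$, which is the only step requiring real care. Choose $m$ with $2^m\geq k$; since $\ker M(y)$ is already known to be an ideal, $f^{2^m}=f^{2^m-k}\cdot f^k\in\ker M(y)$. For $j=m,m-1,\dots,1$, write $g_j:=f^{2^{j-1}}$ so that $g_j^2=f^{2^j}$; from $f^{2^j}\in\ker M(y)$ fact (b) gives $L_y(g_j^2)=L_y(f^{2^j})=0$, and then fact (a) produces $g_j\in\ker M(y)$, which is the inductive hypothesis at stage $j-1$. After $m$ iterations this yields $f\in\ker M(y)$, matching the definition of real radicality. The main obstacle is precisely this power-of-two descent: the shift identity and PSD kernel-versus-quadratic-form equivalence are routine, but one must be careful that $\sqrt[\R]{\cdot}$ demands $f$ itself in the ideal rather than merely some power of $f$.
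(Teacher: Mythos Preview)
The paper does not supply its own proof of this lemma: it is quoted verbatim as \cite[Proposition 3.6]{LLR08} and used as a black box, so there is nothing in the paper to compare against.

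Your argument is correct and is essentially the standard one found in the cited source and in Laurent's survey. The shift identity $(M(y)\vect(x_ip))_\alpha=(M(y)\vect(p))_{\alpha+e_i}$ is exactly what drives the ideal property, and your use of the Riesz functional $L_y$ together with the PSD equivalence $M(y)v=0\Leftrightarrow v^TM(y)v=0$ is the canonical route to real-radicality. The power-of-two descent is handled cleanly; one small remark is that once you know $\ker M(y)$ is an ideal you could also argue directly that $f^k\in\ker M(y)\Rightarrow f^{2k}\in\ker M(y)\Rightarrow L_y((f^k)^2)=0$ is automatic, but your phrasing via fact~(b) with $h=1$ is equally valid. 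Nothing is missing.
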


The kernel of the truncated moment matrix $M_t(y)$  is not an ideal,
but under certain conditions, it has the following properties.

\begin{proposition}\cite[Lemma 3.5, 3.9]{LLR08}\label{t1}
Let $y\in\R^{\N^n_{2t}}$  and its truncated  moment matrix $M_t(y)$
is positive semidefinite.
\begin{itemize}
\item [(i)] If $f, g\in\R[x]$ with $\deg(fg)\leq t-1$, then
$f\in\ker M_t(y) \Longrightarrow fg\in\ker M_t(y)$.

\item [(ii)] For a polynomial $p\in \R[x]$, if
$p^{2k}+ \sigma \in \ker M_t(y)$ for some $k\in\N$ and $\sigma \in \sum
\R[x]^2$, then $p\in\ker M_t(y)$.

\item [(iii)] We have   $\ker M_t(y)\cap\R[x]_{s} =
    \ker M_s(y) \quad\text{for} ~1\leq s\leq t$.
\end{itemize}
\end{proposition}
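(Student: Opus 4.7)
The plan is to work throughout with the Riesz functional $L_y(p):=\sum_\alpha p_\alpha y_\alpha$, defined on polynomials of degree at most $2t$, and to exploit two basic identities repeatedly: $L_y(pq)=\vect(p)^T M_t(y)\vect(q)$ whenever $p,q\in\R[x]_t$, and the Cauchy--Schwarz inequality $L_y(pq)^2\le L_y(p^2)L_y(q^2)$ for $p,q\in\R[x]_t$, which is a consequence of $M_t(y)\succeq 0$. A preliminary reformulation that I would use is that $f\in\ker M_t(y)$ is equivalent to $L_y(f^2)=0$, and hence, by Cauchy--Schwarz, to $L_y(fh)=0$ for every $h\in\R[x]_t$.

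For (i), I would reduce to the following one-step claim: if $f\in\ker M_t(y)$ and $\deg(f)\le t-2$, then $x_i f\in\ker M_t(y)$ for every variable $x_i$. Granting this, induction on $|\gamma|$ yields $fx^\gamma\in\ker M_t(y)$ whenever $\deg(f)+|\gamma|\le t-1$; since $\ker M_t(y)$ is a linear subspace, expanding $g$ in monomials then gives $fg\in\ker M_t(y)$ for $\deg(fg)\le t-1$. To prove the one-step claim, I would examine $(M_t(y)\vect(x_i f))_\alpha=L_y(x^{\alpha+e_i}f)$ for $|\alpha|\le t$. When $|\alpha|\le t-1$ this is $(M_t(y)\vect(f))_{\alpha+e_i}=0$, so the only delicate case is $|\alpha|=t$. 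There I would factor $L_y(x^{\alpha+e_i}f)=\vect(x^\alpha)^T M_t(y)\vect(x_i f)$ and invoke Cauchy--Schwarz to bound its square by $L_y(x^{2\alpha})\cdot L_y(x_i^2 f^2)$; the second factor rewrites as $\vect(x_i^2 f)^T M_t(y)\vect(f)=0$ by $f\in\ker M_t(y)$, and this rewriting is legitimate precisely because $\deg(x_i^2 f)\le t$, which is where the hypothesis $\deg(f)\le t-2$ is consumed.

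For (ii), the starting point is that evaluating $(M_t(y)\vect(p^{2k}+\sigma))_0=0$ yields $L_y((p^k)^2)+\sum_j L_y(q_j^2)=0$. Each summand is nonnegative by $M_t(y)\succeq 0$ (the required degree bookkeeping $\deg(p^k),\deg(q_j)\le t/2$ follows from $\deg(p^{2k}+\sigma)\le t$, since top-degree parts of sums of squares cannot cancel), so each must vanish. I would then propagate $L_y(p^{2k})=0$ downward to $L_y(p^2)=0$ using the log-convexity inequality $L_y(p^{2m})^2\le L_y(p^{2(m-1)})L_y(p^{2(m+1)})$, which is Cauchy--Schwarz applied to the factorization $p^{2m}=p^{m-1}\cdot p^{m+1}$ and is valid as long as $(m+1)\deg(p)\le t$. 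Running this inequality backward from $m=k-1$ forces $L_y(p^{2m})=0$ successively for $m=k-1,\dots,1$; in particular $L_y(p^2)=0$, so by PSD $p\in\ker M_t(y)$.

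For (iii), the inclusion $\ker M_t(y)\cap\R[x]_s\subseteq\ker M_s(y)$ is immediate: restricting the equation $M_t(y)\vect(p)=0$ to rows indexed by $|\alpha|\le s$ yields $M_s(y)\vect(p)=0$. For the reverse inclusion, given $p\in\ker M_s(y)$ I would note that $M_s(y)$ is the principal submatrix of $M_t(y)$ on indices of degree $\le s$, so $\vect(p)^T M_t(y)\vect(p)=\vect(p)^T M_s(y)\vect(p)=0$; Cauchy--Schwarz inside $M_t(y)$ then gives $L_y(x^\alpha p)^2\le L_y(x^{2\alpha})\cdot L_y(p^2)=0$ for every $|\alpha|\le t$, so $p\in\ker M_t(y)$. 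I expect the main obstacle to sit in part (i): the one-step claim just barely works, and the slack $\deg(fg)\le t-1$ rather than $\le t$ is exactly what is needed to keep $x_i^2 f$ inside $\R[x]_t$ so that the Cauchy--Schwarz factorization can conclude.
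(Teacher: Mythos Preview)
The paper does not supply its own proof of this proposition; it is quoted as a preliminary result from \cite{LLR08} (their Lemmas 3.5 and 3.9), so there is nothing in the present paper to compare your argument against. That said, your proof is correct and is essentially the standard one. The reduction of (i) to a single multiplicative step, with the key degree bound $\deg(x_i^2 f)\le t$ enabling the rewrite $L_y(x_i^2 f^2)=\vect(x_i^2 f)^T M_t(y)\vect(f)=0$, is exactly the mechanism in the original source, and your observation that this is precisely where the hypothesis $\deg(fg)\le t-1$ (rather than $\le t$) is consumed is on point. In (ii) your use of the SOS no-cancellation argument to bound $\deg(p^k),\deg(q_j)\le t/2$, followed by the log-convexity chain $L_y(p^{2m})^2\le L_y(p^{2(m-1)})L_y(p^{2(m+1)})$ to descend from $L_y(p^{2k})=0$ to $L_y(p^2)=0$, is clean and correct; the degree constraint $(m+1)\deg p\le k\deg p\le t/2$ is satisfied throughout the descent $m=k-1,\dots,1$. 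Part (iii) is immediate from the PSD kernel characterization, as you note.
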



Generic elements of $\Kk_t$ have useful properties.
 The following results are cited from
\cite[Lemma 3.1]{LLR08} and \cite[Lemma 7.28, 7.39]{Rostalski09}.
\begin{proposition}\label{t2}
Assume $y\in \Kk_t^{gen}$ is generic.
\begin{itemize}
\item [(i)]
For all $1\leq s \leq t$, we have $\ker M_s(y)\subseteq \sqrt[\R]{I}$ and
$\ker M_s(y)\subseteq \ker M_s(z)$ for all $z\in\Kk_t$.

\item [(ii)]
If $t\leq t'$ and $y'\in \Kk_{t'}^{gen}$, then $ \ker
M_t(y) \subseteq \ker  M_{t'}(y').$

\item [(iii)]
For every finite basis $\{g_1,\ldots,g_k\}$ of the real radical ideal
$\sqrt[\R]{I}$, there exists $t_0\in\N$ such that $g_1,\ldots,g_k\in
\ker M_t(z)$ for all $z\in\Kk_t$ and $t\geq t_0$.

\item [(iv)] It holds that
$\langle \ker M_t(y)\rangle=\sqrt[\R]{I}$ if $t$ is sufficiently
large.
\end{itemize}
\end{proposition}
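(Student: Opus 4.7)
The common engine for all four parts is the convexity of $\Kk_t$ (immediate from the linearity of $y \mapsto M_t(y)$ and $y \mapsto M_{t-d_j}(h_j y)$ together with the convex cone of PSD matrices) paired with the identity $\ker(A+B) = \ker A \cap \ker B$ valid for positive semidefinite $A, B$. My plan is to fix these two facts at the outset and use them repeatedly.

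For (i) I would treat the two inclusions in reverse order. For $\ker M_s(y) \subseteq \ker M_s(z)$: given any $z \in \Kk_t$, the midpoint $w = \tfrac{1}{2}(y+z)$ lies in $\Kk_t$, and the PSD kernel identity gives $\ker M_t(w) = \ker M_t(y) \cap \ker M_t(z)$. Since $\rank M_t(y)$ is maximum on $\Kk_t$, the rank of $M_t(w)$ cannot exceed it, so the three kernels coincide and in particular $\ker M_t(y) \subseteq \ker M_t(z)$; restricting to $\R[x]_s$ via Proposition \ref{t1}(iii) settles the $s$-case. For $\ker M_s(y) \subseteq \sqrt[\R]{I}$: if $V_\R(I) = \emptyset$ then $\sqrt[\R]{I} = \R[x]$ and the inclusion is trivial; otherwise, for each $v \in V_\R(I)$ the Dirac sequence $\delta_v := (v^\alpha)_{|\alpha|\leq 2t}$ belongs to $\Kk_t$ (since $h_j(v)=0$ makes $h_j \delta_v = 0$), and $M_t(\delta_v) = [v]_t [v]_t^{\top}$ has kernel $\{p \in \R[x]_t : p(v)=0\}$. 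The previous inclusion then forces every $p \in \ker M_s(y)$ to vanish on $V_\R(I)$, so $p \in I(V_\R(I)) = \sqrt[\R]{I}$ by the Real Nullstellensatz.

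For (ii) I would note that if $y' \in \Kk_{t'}^{gen}$ then the truncation $y'|_{2t}$ still satisfies all the defining conditions and thus lies in $\Kk_t$; applying (i) to $y$ and $z = y'|_{2t}$ yields $\ker M_t(y) \subseteq \ker M_t(y')$, and Proposition \ref{t1}(iii) inflates this to $\ker M_t(y) \subseteq \ker M_{t'}(y')$. For (iii) I would use that each generator $g_i$ of $\sqrt[\R]{I}$ admits a certificate $g_i^{2k_i} + \sigma_i \in I$ with $\sigma_i$ SOS, write $g_i^{2k_i}+\sigma_i = \sum_j q_{ij}h_j$, and choose $t_0$ large enough that each summand $q_{ij}h_j$ is captured by the constraint $M_{t-d_j}(h_j z) = 0$ together with the multiplicative closure of Proposition \ref{t1}(i), concluding $g_i^{2k_i}+\sigma_i \in \ker M_t(z)$; Proposition \ref{t1}(ii) then upgrades this to $g_i \in \ker M_t(z)$. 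Finally (iv) drops out by combining the two directions: (i) gives $\langle \ker M_t(y)\rangle \subseteq \sqrt[\R]{I}$, while Hilbert's basis theorem supplies finite generators of $\sqrt[\R]{I}$ which, by (iii) applied at $z = y$, land in $\ker M_t(y)$ for $t$ large.

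The main obstacle is the degree bookkeeping in (iii): one must verify carefully that the representation $g_i^{2k_i}+\sigma_i = \sum_j q_{ij}h_j$, whose individual summands may have arbitrarily high degree, can be transferred into $\ker M_t(z)$ for a uniform $t_0$ depending only on the chosen certificates and SOS decompositions. Once this is pinned down, the rest of the argument is essentially bookkeeping around the two structural tools (convexity of $\Kk_t$ and the PSD kernel identity).
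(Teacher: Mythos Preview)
Your argument is correct and is essentially the standard one. The paper does not give its own proof of Proposition~\ref{t2}---it cites the result from \cite{LLR08} and \cite{Rostalski09}---but the convexity/midpoint trick and the certificate-based argument you outline are precisely what the paper itself uses when it proves the semialgebraic analogues (Lemma~\ref{t5} and Theorem~\ref{t6}).
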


In the following, we review some properties of moment matrices in the occurrence of inequality constraints. Consider the semialgebraic
set \begin{equation}
\label{e,1} \Aa:=\{x\in\R^n\mid f_1(x)\geq0, \ldots,
f_s(x)\geq0\},
\end{equation}
where $f_1,\ldots,f_s\in\R[x]$. The $\Aa$-variety $V_{\Aa}(I)$
denotes the intersection
\[V_{\A}(I)=V_{\R}(I)\cap \Aa.\]
For every $\nu \in \{0,1\}^s$, we denote the product $f^\nu :=
f_1^{\nu_1}f_2^{\nu_2}\cdots f_s^{\nu_s}$.

\begin{definition}\cite{Marshall08}\label{t3}
The $\mathcal{A}$-radical of an ideal $I$ is defined as
\[
 \sqrt[\Aa]{I}:=\left\{p\in\R[x]
\left| p^{2k}+\sum_{ \nu \in\{0,1\}^s}\sigma_\nu f^\nu \in I \right.
 \,~\text{for some}~  k\in\N, \, \sigma_\nu \in \sum \R[x]^2 \right\}.
 \]
The ideal $I$ is called $\Aa$-radical if $I=\sqrt[\Aa]{I}$.
\end{definition}


\begin{theorem}\label{t4}
\cite[Semialgebraic Nullstellensatz]{Stengle74} Let $I$ 
 be an ideal in $\R[x]$ and $\Aa$ be defined by  (\ref{e,1}). Then
$\sqrt[\mathcal{A}]{I}$ is an $\Aa$-radical ideal  and
$\sqrt[\Aa]{I}=I(V_{\R}(I)\cap\Aa).$
\end{theorem}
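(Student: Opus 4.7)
The plan is to split the statement into two claims: the set-theoretic identity $\sqrt[\Aa]{I}=I(V_{\R}(I)\cap\Aa)$, and the self-radical assertion (that $\sqrt[\Aa]{I}$ is an ideal that equals its own $\Aa$-radical). I would derive the second from the first, so essentially the whole proof reduces to establishing the two inclusions of the identity.

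The easy direction $\sqrt[\Aa]{I}\subseteq I(V_{\R}(I)\cap\Aa)$ I would argue pointwise. Given $p$ on the left, fix a certificate $p^{2k}+\sum_{\nu\in\{0,1\}^s}\sigma_\nu f^\nu = h \in I$. For any $x\in V_{\R}(I)\cap\Aa$: each $f_i(x)\geq 0$ since $x\in\Aa$, so every $f^\nu(x)\geq 0$; each $\sigma_\nu(x)\geq 0$ because $\sigma_\nu$ is a sum of squares; and $h(x)=0$ because $x\in V_{\R}(I)$. Therefore $p^{2k}(x) = -\sum_\nu \sigma_\nu(x)f^\nu(x)\leq 0$, while also $p^{2k}(x)\geq 0$, forcing $p(x)=0$. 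Thus $p\in I(V_{\R}(I)\cap\Aa)$.

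The main obstacle is the reverse inclusion, which is the nontrivial content of Stengle's \emph{Positivstellensatz}. My plan is to invoke it in its standard form: if $p$ vanishes identically on the basic closed semialgebraic set $V_{\R}(I)\cap\Aa$, then there exist $k\in\N$, sums of squares $\sigma_\nu\in\sum\R[x]^2$, and $h\in I$ with $p^{2k}+\sum_{\nu\in\{0,1\}^s}\sigma_\nu f^\nu = h$, which is precisely the certificate placing $p\in\sqrt[\Aa]{I}$. Reproving Stengle's theorem itself is well beyond a sketch; it proceeds by combining the preordering generated by $f_1,\ldots,f_s$ with a Tarski-Seidenberg transfer argument over real closed fields (or equivalently, via the abstract real spectrum and Krivine's representation theorem). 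I would cite this result rather than reprove it; everything else is essentially bookkeeping.

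Once the identity is in hand, the remaining statements drop out cheaply. First, $\sqrt[\Aa]{I}$ is an ideal because vanishing ideals always are. Second, if $q\in\sqrt[\Aa]{\sqrt[\Aa]{I}}$, some $q^{2k}+\sum_\nu\sigma_\nu f^\nu$ lies in $\sqrt[\Aa]{I}=I(V_{\R}(I)\cap\Aa)$, and repeating the pointwise evaluation from the easy direction at any $x\in V_{\R}(I)\cap\Aa$ forces $q(x)=0$; hence $q\in I(V_{\R}(I)\cap\Aa)=\sqrt[\Aa]{I}$. The opposite containment $\sqrt[\Aa]{I}\subseteq\sqrt[\Aa]{\sqrt[\Aa]{I}}$ is immediate (take $k=1$ with all $\sigma_\nu=0$, or note it from the definition), so $\sqrt[\Aa]{I}$ is $\Aa$-radical, completing the proof.
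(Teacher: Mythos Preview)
Your outline is correct, but there is nothing to compare against: the paper does not prove this theorem. It is stated as a citation of Stengle's Semialgebraic Nullstellensatz and used as a black box in the subsequent arguments (Lemma~\ref{t5} and Theorem~\ref{t6}). So the paper's ``proof'' is simply the reference \cite{Stengle74}.

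That said, your sketch is the standard way one would unpack the result if asked to. The easy inclusion by pointwise evaluation is exactly right, and you correctly identify that the hard inclusion is the real content of Stengle's Positivstellensatz, which one cites rather than reproves. Deriving the $\Aa$-radical property from the identity $\sqrt[\Aa]{I}=I(V_{\R}(I)\cap\Aa)$ is also clean. One small remark: you could alternatively note that $\sqrt[\Aa]{\sqrt[\Aa]{I}} = I(V_{\R}(\sqrt[\Aa]{I})\cap\Aa) = I(V_{\R}(I)\cap\Aa) = \sqrt[\Aa]{I}$, using that $V_{\R}(\sqrt[\Aa]{I}) \cap \Aa = V_{\R}(I)\cap\Aa$ (which follows from the easy inclusion applied twice), but your pointwise argument works just as well.
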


To compute the $\Aa$-radical ideal $\sqrt[\Aa]{I}$, we consider the set
\begin{equation}
\Kk_{t,\Aa}:= \Kk_t \cap \left\{y\in \R^{\N^n_{2t}}:
M_{t-d_{f^{\nu}}}(f^{\nu} y) \succeq 0, ~\forall \nu \in \{0,1\}^s
\right \},
\end{equation}
where $d_{f^{\nu}}=\lceil\deg(f^{\nu})/2\rceil$.
Clearly, the set $\Kk_{t,\Aa}$ is a restriction of $\Kk_t$.
The definition of the set $\Kk_{t,\A}$ is motivated by the
polynomials in $\sqrt[\A]{I}$ and the Semialgebraic Nullstellensatz.
The generic elements of $\Kk_{t,\Aa}$ are similarly defined to be
the elements of the set
\[
\Kk_{t,\Aa}^{gen}:=\{ y \in \Kk_{t,\Aa}: \rank \, M_t(y) \mbox{ is
maximum over }  \Kk_{t,\Aa}\}.
\]

\begin{lemma}\label{t5}
Let $\{g_1,\ldots,g_k\}$ be a set of generators for the ideal $\sqrt[\Aa]{I}$.
Then there exists $t_0\in\N$ such that $g_1,\ldots,g_k\in \ker M_t(y)$ for all $y\in\Kk_{t,\Aa}$ and $t\geq t_0$.
\end{lemma}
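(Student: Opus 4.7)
The plan is to mirror the proof of Proposition \ref{t2}(iii) but account for the extra positivity constraints coming from the $f^\nu$'s. For each generator $g_j \in \sqrt[\Aa]{I}$, Definition \ref{t3} furnishes a Positivstellensatz-style certificate: there exist $k_j\in\N$, sums of squares $\sigma_{\nu,j}$ and polynomials $u_{i,j}\in\R[x]$ such that
\[
g_j^{2k_j} + \sum_{\nu\in\{0,1\}^s} \sigma_{\nu,j}\, f^\nu \;=\; \sum_i u_{i,j}\, h_i.
\]
Since the finite data $\{k_j,\sigma_{\nu,j},u_{i,j}\}$ are fixed, we can choose $t_0\in\N$ large enough that, for every $j$ and every $\nu, i$: $\deg(g_j^{k_j})\leq t_0$, $\deg(\sigma_{\nu,j})\leq 2(t_0 - d_{f^\nu})$ and $\deg(u_{i,j})\leq 2(t_0 - d_i)$, so that each summand fits inside the block of the truncated moment matrix where the corresponding constraint is available.

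Fix $y\in\Kk_{t,\Aa}$ with $t\geq t_0$ and apply the Riesz functional $L_y$ to the identity above. On the right-hand side, the equality $M_{t-d_i}(h_i y)=0$ combined with the degree bound on $u_{i,j}$ gives $L_y(u_{i,j} h_i)=0$, so the RHS contributes zero. On the left-hand side, $L_y(g_j^{2k_j})=\vect(g_j^{k_j})^T M_t(y)\vect(g_j^{k_j})\geq 0$ by $M_t(y)\succeq 0$, and for each $\nu$ the condition $M_{t-d_{f^\nu}}(f^\nu y)\succeq 0$ combined with an SOS decomposition of $\sigma_{\nu,j}$ yields $L_y(\sigma_{\nu,j} f^\nu)\geq 0$. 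Since a sum of nonnegative terms equals zero, each term vanishes; in particular $\vect(g_j^{k_j})^T M_t(y)\vect(g_j^{k_j})=0$, and the positive semidefiniteness of $M_t(y)$ forces $M_t(y)\vect(g_j^{k_j})=0$, i.e., $g_j^{k_j}\in\ker M_t(y)$.

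It then remains to descend from $g_j^{k_j}$ to $g_j$ inside $\ker M_t(y)$, which is exactly what Proposition \ref{t1} is designed for: absorbing one more power via part~(i) (enlarging $t_0$ slightly if needed so that $\deg(g_j^{2k_j})\leq t-1$) gives $g_j^{2k_j}\in\ker M_t(y)$, and then applying part~(ii) with $p=g_j$ and $\sigma=0$ yields $g_j\in\ker M_t(y)$. Taking $t_0$ to be the maximum of the bounds required across the finitely many generators $g_1,\ldots,g_k$ completes the argument. The only real bookkeeping — and the step most likely to be fiddly — is tracking the degrees through the SOS decomposition of each $\sigma_{\nu,j}$ and the ``one-degree slack'' needed for Proposition \ref{t1}(i); conceptually, however, the proof is a direct generalisation of Proposition \ref{t2}(iii) that simply replaces the strict kernel condition on the $f^\nu$-blocks by the weaker positive semidefiniteness, which is precisely strong enough to keep the sign argument intact.
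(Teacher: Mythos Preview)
Your proof is correct and follows essentially the same route as the paper's: both use the Positivstellensatz certificate for each $g_j$, kill the right-hand side via the ideal/localizing constraints, and then use positive semidefiniteness to force $g_j^{k_j}\in\ker M_t(y)$. The only cosmetic difference is the final descent: the paper iteratively halves the exponent (handling odd exponents by first multiplying by one more copy of $g_j$ via Proposition~\ref{t1}(i)), whereas you square once to reach $g_j^{2k_j}$ and then invoke Proposition~\ref{t1}(ii) with $\sigma=0$ in a single step --- a slightly cleaner but equivalent finish.
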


The following proof mimics the proof of Claim 4.7 in \cite{LLR08}.
\begin{proof}
For each $\ell=1,\ldots,k$, by Theorem \ref{t4}, there exists
$m_l\in\N$ and polynomials $\sigma_\nu \in\Sigma\R[x]^2$ and
$u_j\in\R[x]$ for $1\leq j \leq m$ such that
\begin{equation}
\label{e,2} g_l^{2m_l}+\sum_{ \nu
\in\{0,1\}^s } \sigma_\nu f^\nu = \sum_{j=1}^m u_jh_j.
\end{equation}

For $t\geq t_0$, where \[t_0=1+\max(d, \deg(g_l^{2m_l}),
\deg(\sigma_\nu f^\nu ),\deg(u_jh_j)),\]
 since $\deg (u_jh_j)\leq
t-1$ and $h_j\in\ker M_t(y)$,  by Proposition \ref{t1} (i), we have
$u_jh_j\in\ker M_t(y)$,
i.e., $g_l^{2m_l}+\sum_{\nu\in\{0,1\}^s}\sigma_{\nu} f^{\nu}\in\ker
M_t(y)$.
Set $\sigma_\nu=\sum_{j} \sigma_{\nu, j}^2\in\Sigma\R[x]^2$,
then we have
\begin{align*}
    \vect(g_l^{m_l})^T M_t(y)\,\vect(g_l^{m_l})+ \sum_{\nu, j} \vect(\sigma_{\nu, j})^T
    M_{t-d_{f^{\nu}}}(f^{\nu} y) \,\vect
    (\sigma_{\nu, j})=0. 
\end{align*}
Since $M_t(y)\succeq 0$ and 
$M_{t-d_{f^{\nu}}}(f^{\nu} y) \succeq 0$,
every summand in the above expression  
must be zero, and thus
$g_l^{m_l}\in \ker M_t(y)$. If $m_l$ is even, $g_l^{m_l}\in \ker
M_t(y)$ implies $g_l^{m_l/2}\in \ker M_t(y)$. If $m_l$ is odd,
 since $\deg(g_l^{m_l+1}) \leq t-1$, we have
\[
g_l^{m_l}\in \ker M_t(y) \Rightarrow g_l^{m_l+1}\in \ker M_t(y)
\Rightarrow g_l^{(m_l+1)/2}\in \ker M_t(y).
\]
Repeat this process, we can  show that $ g_l\in\ker M_t(y).$
\end{proof}

\begin{theorem}\label{t6}
There exists $t_0\in\N$ such that $\langle \ker
M_t(y)\rangle=\sqrt[\Aa]{I}$ for all $y\in \Kk_{t,\Aa}^{gen}$
and $t\geq t_0$.
\end{theorem}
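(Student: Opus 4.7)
The plan is to prove the equality as two inclusions, $\sqrt[\Aa]{I} \subseteq \langle \ker M_t(y) \rangle$ and $\langle \ker M_t(y) \rangle \subseteq \sqrt[\Aa]{I}$, valid for all sufficiently large $t$ and every $y \in \Kk_{t,\Aa}^{gen}$. The first inclusion should follow almost directly from Lemma \ref{t5}: by Noetherianity, $\sqrt[\Aa]{I}$ admits a finite generating set $g_1,\ldots,g_k$, and Lemma \ref{t5} provides a threshold $t_0$ so that $g_1,\ldots,g_k \in \ker M_t(y)$ for all $y \in \Kk_{t,\Aa}$ (hence in particular for $y \in \Kk_{t,\Aa}^{gen}$) and all $t \geq t_0$; then $\sqrt[\Aa]{I} = \langle g_1,\ldots,g_k \rangle \subseteq \langle \ker M_t(y) \rangle$.

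The reverse inclusion is the substantive direction. The idea is to show $\ker M_t(y) \subseteq I(V_{\R}(I) \cap \Aa)$, after which Theorem \ref{t4} identifies the right-hand side as $\sqrt[\Aa]{I}$. The main tool is that each point $v \in V_{\R}(I) \cap \Aa$ produces a feasible moment sequence, namely the truncated Dirac sequence $\zeta_v := (v^\alpha)_{\alpha \in \N^n_{2t}}$. One verifies $\zeta_v \in \Kk_{t,\Aa}$ directly: $M_t(\zeta_v) = [v]_t [v]_t^T \succeq 0$; each $M_{t-d_j}(h_j \zeta_v)$ vanishes because $h_j(v) = 0$; and $M_{t-d_{f^\nu}}(f^\nu \zeta_v) = f^\nu(v)\, [v]_{t-d_{f^\nu}} [v]_{t-d_{f^\nu}}^T \succeq 0$ because each $f_i(v)\geq 0$ makes $f^\nu(v) \geq 0$. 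A standard convexity argument---essentially the one underlying Proposition \ref{t2}(i)---then applies: $\Kk_{t,\Aa}$ is convex (cut out by linear equalities and linear matrix inequalities in $y$), the midpoint $w = \tfrac12(y + \zeta_v)$ lies in $\Kk_{t,\Aa}$, and since $M_t(y), M_t(\zeta_v) \succeq 0$ one has the identity $\ker M_t(w) = \ker M_t(y) \cap \ker M_t(\zeta_v)$. Maximality of $\rank M_t(y)$ over $\Kk_{t,\Aa}$ now forces $\ker M_t(y) \subseteq \ker M_t(\zeta_v)$, so for any $p \in \ker M_t(y)$ we get $0 = M_t(\zeta_v) \vect(p) = p(v)\cdot [v]_t$, i.e.\ $p(v) = 0$. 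Since $v \in V_{\R}(I) \cap \Aa$ was arbitrary, $p \in I(V_{\R}(I) \cap \Aa) = \sqrt[\Aa]{I}$.

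The main point requiring care is the convex-combination identity $\ker M_t(w) = \ker M_t(y) \cap \ker M_t(\zeta_v)$; this is where positive semidefiniteness is essential, via the observation that if $M_t(y), M_t(\zeta_v) \succeq 0$ and $\vect(p)^T M_t(w) \vect(p) = 0$, then both $\vect(p)^T M_t(y) \vect(p)$ and $\vect(p)^T M_t(\zeta_v) \vect(p)$ must vanish, forcing $\vect(p)$ into both kernels. Everything else is bookkeeping: the $\Aa$-radical and the Semialgebraic Nullstellensatz replace the real-radical machinery used in the analogous earlier real-radical results, and the threshold $t_0$ can be taken to be the one supplied by Lemma \ref{t5} for any fixed generating set of $\sqrt[\Aa]{I}$.
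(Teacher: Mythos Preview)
Your proposal is correct and follows essentially the same approach as the paper: both prove $\sqrt[\Aa]{I} \subseteq \langle \ker M_t(y)\rangle$ via Lemma~\ref{t5} applied to a finite generating set, and both obtain the reverse inclusion by taking the midpoint of $y$ with the Dirac sequence at an arbitrary $v\in V_{\R}(I)\cap\Aa$, using the kernel-intersection identity for sums of positive semidefinite matrices and rank maximality to deduce $p(v)=0$, then invoking Theorem~\ref{t4}. Your write-up is in fact somewhat more explicit than the paper's about why $\zeta_v\in\Kk_{t,\Aa}$ and why the kernel identity holds.
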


\begin{proof}
Let $\{g_1,\ldots,g_k\}$ be a set of generators for the ideal $\sqrt[\Aa]{I}$. According to Lemma \ref{t5}, we can choose $t_0\in\N$ such that $g_1,\ldots,g_k\in \ker M_t(y)$ for all $y\in\Kk_{t,\Aa}$ and $t\geq t_0$.
Let $y\in \Kk_{t,\Aa}^{gen}$ and choose an arbitrary point $v\in
V_{\Aa}(I)$. Then $[v]_{2t} \in \Kk_{t,\Aa}$ and
$z=(y+[v]_{2t})/2 \in \Kk_{t,\Aa}$. Clearly, it holds that
\[
\ker M_t((y + [v]_{2t})/2) = \ker M_t(y) \cap \ker M_t([v]_{2t}).
\]
The rank of $M_t(y)$ being maximum implies $\ker M_t((y +
[v]_{2t})/2) = \ker M_t(y)$ and $\ker M_t(y) \subseteq
\ker M_t([v]_{2t})$. For every $p \in \ker M_t(y)$, we must have $p \in
\ker M_t([v]_{2t})$ and $p(v)=0$. This means that $p$ vanishes on the set
$V_{\Aa}(I)$. By Theorem \ref{t4}, we get $p\in \sqrt[\Aa]{I}$ and
thus the inclusion $\langle\ker M_{t}(y)\rangle\subseteq
\sqrt[\Aa]{I}=I(V_{\Aa}(I))$ holds. 
Since $g_1,\ldots,g_k\in \ker M_t(y)$, we get 
$\langle\ker M_{t}(y)\rangle\supseteq \sqrt[\Aa]{I}=I(V_{\Aa}(I))$
and the proof is completed. 
\end{proof}

\subsection{Involutive Divisions and Involutive Bases}

When the real algebraic variety $V_{\R}(I)$ is finite, Lasserre et
al. \cite{LLMTR11, LLR08} proposed  new approaches based on moment
relaxations for computing Gr\"obner bases or border bases of the
real radical ideal $\sqrt[\R]{I}$. For the positive dimensional real
variety $V_{\R}(I)$, we can also compute its Gr\"obner bases.
 Stimulated by the work in \cite{LLR09a} and
\cite{RZ09,Robin:2006,SRWZ09}, 
we propose a new approach based on the  completion to involution to
compute a  Pommaret basis of an ideal nested between $I$ and
$\sqrt[\R]{I}$. A Pommaret basis is simultaneously a Gr\"obner
basis, but  contains extra information such as the
Castelnuovo-Mumford regularity. Moreover, we provide a new  stopping
criterion for the algorithm  which is based on the classical
Cartan's test for  involution from the theory of exterior
differential systems. We now  introduce some basic concepts from the
classical theory of involutive systems for polynomial systems. For
background, see \cite{Seiler02,Seiler2010}.

\begin{definition}
Let $\nu=[\nu_1,\ldots,\nu_n]\in \N^n$ be the  multi index of a
monomial $x^\nu$. If $k$ is the smallest value such that
$\nu_{k}\neq0$, then the class of $\nu$ or $x^\nu$ is $k$, written
by $\cls(\nu)=k$ or $\cls(x^\nu)=k$. The class of a polynomial $f$
which is denoted by $\cls(f)$ is $k$, if the class of its leading term $\cls(\lt(f))=k$. 
\end{definition}

We say that a term order  \textit{respects classes}, if for
monomials $x^\mu$ and $x^\nu$ of the same total degree,
$\cls{(\mu)}<\cls{(\nu)}$ implies $x^\mu \prec x^\nu$.  An important
example  of a class respecting ordering is the  \textit{graded
reverse lexicographic} order $\tdeg$.

\begin{definition}
With an ordering on the variables $x_1 \prec \cdots\prec x_n$, the
graded reverse lexicographic order  $\tdeg$ is defined by $
x^{\alpha}\tdeg x^{\beta}$,  if $ |\alpha|<|\beta|$, or $
|\alpha|=|\beta|$ and the first non-vanishing entry of the multi
index  $\alpha-\beta$ is positive.
\end{definition}

Throughout the paper, we use $\tdeg$ in assigning orders of
monomials, and sorting rows and columns of a moment matrix $M_t(y)$.
Let  $(\N^n,+)$ be an Abelian monoid  with the addition defined
componentwise. For any multi index $\nu \in \N^n$, we introduce its
cone $\Cc(\nu)=\nu+\N^n$, i.e., the set of all multi indices that
can be reached from $\nu$ by adding another multi index.
%
%

\begin{definition}\cite[Definition 3.1.1]{Seiler2010}\label{indef}
An involutive division $L$ is defined on the monoid $(\N^n,+)$, if
for any finite subset $\Bb \subset \N^n, \ a \ set \
N_{L,\Bb}(\nu)\subseteq \{1,\ldots,n\}$ of multiplicative indices,
and consequently a submonoid $L(\nu,\Bb)=\{\mu \in \N^n \mid \forall
j \not\in N_{L,\Bb}(\nu) : \mu_j=0\}$,
 is associated to every multi index $\nu \in \Bb$ such that the following two conditions on the involutive cones
 $\Cc_{L,\Bb}(\nu)=\nu+L(\nu,\Bb)\subseteq \N^n$ are satisfied.
\begin{itemize}
\item [(i)]  If there exist two elements $\mu,\nu \in \Bb$ with $\Cc_{L,\Bb}(\mu) \cap \Cc_{L,\Bb}(\nu) \neq \emptyset$,
either $\Cc_{L,\Bb}(\mu)\subseteq\Cc_{L,\Bb}(\nu)$ or $\Cc_{L,\Bb}(\nu)\subseteq\Cc_{L,\Bb}(\mu)$ holds.
\item [(ii)] If $\Bb'\subset \Bb$, then $N_{L,\Bb}(\nu)\subseteq N_{L,\Bb'}(\nu)$ for all $\nu \in \Bb'$.
\end{itemize}
An arbitrary multi index $\mu \in \N^n$ is involutively divisible by $\nu \in \Bb$, written $\nu \mid_{L,\Bb}\mu$,
if $\mu \in \Cc_{L,\Bb}(\nu)$. In this case $\nu$ is called an involutive divisor of $\mu$.
\end{definition}


\begin{definition}\label{t9}\cite[Example 3.1.7]{Seiler2010} The  Pommaret
division written by $P$ is defined by  a simple rule: if
$\cls(\nu)=k$, then we set $N_{L,\Bb}(\nu)=\{1,\ldots,k\}$.
\end{definition}

\begin{remark}
The Pommaret division is a globally defined division as the
assignment of the multiplicative indices to a multi
index $\nu \in
\Bb$ is independent of the set $\Bb$. The
Pommaret division is an involutive division by \cite[Lemma 3.1.8]{Seiler2010}.
\end{remark}
\begin{definition}\cite[Definition 3.1.9]{Seiler2010}\label{f,1}
The involutive span of a finite set $\Bb\subset \N^n$ is
\begin{equation}
\label{e,3} \langle \Bb \rangle_L =\bigcup_{\nu\in\Bb}
\Cc_{L,\Bb}(\nu).
\end{equation}\\
The set $\Bb$ is called weakly involutive for the division $L$
or a weak involutive basis of the monoid ideal $\langle\Bb\rangle$,
if $\langle \Bb \rangle_L$=$\langle \Bb \rangle$.
The set $\Bb$ is a strong involutive basis or for short an involutive basis,
if the union (\ref{e,3}) is disjoint, i.e.,
 the intersections of the involutive cones are empty.
\end{definition}

For a polynomial $f \in \K[x]$ and a term order $\prec$, we select
its leading term $\lt (f)=x^\mu$ with the leading exponent
$\lee_{\prec} (f)=\mu$.

\begin{definition}\cite[Definition 3.4.1]{Seiler2010}\label{f,2}
Let $I\subseteq\K[x]$ be an ideal. A finite set $\mathcal{H}\subset
I$ is a \textit{weak involutive basis} of $I$ for an involutive
division $L$ on $\N^n$, if $\lee_{\prec}(\mathcal{H})$ is a weak
involutive basis of the monoid ideal $\lee_{\prec}(I)$. The set
$\mathcal{H}$ is a \textit{strong involutive basis} of $I$, if
$\lee_{\prec}(\mathcal{H})$ is a strong involutive basis of
$\lee_{\prec}(I)$ and two distinct elements of $\mathcal{H}$
 never possess the same leading exponents.
\end{definition}

\begin{remark}
Definition \ref{f,1} and Definition \ref{f,2}
imply immediately that any weak involutive basis is a Gr\"{o}bner
basis.
\end{remark}


Not every ideal in $\K[x]$ possesses a finite Pommaret basis (see
\cite{Seiler2010}).

\begin{definition}\cite[Definition 4.3.1]{Seiler2010}
A coordinate system is called $\delta$-regular for the ideal
$I\subseteq  \K[x]$ and the term order $\prec$, if $I$ possesses a
finite Pommaret basis for the term order $\prec$. 
\end{definition}
\begin{theorem}\cite[Theorem 4.3.15]{Seiler2010}
Every polynomial ideal $I\subseteq  \K[x]$ possesses a
finite Pommaret basis for a term order $\prec$ in 
suitably chosen coordinate systems.
\end{theorem}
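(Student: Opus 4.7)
The plan is to exploit the theory of generic initial ideals, reducing the problem to a combinatorial statement about monomial ideals and then lifting. Fix a class-respecting term order such as $\tdeg$. For a linear change of coordinates $g\in GL_n(\K)$, one considers the leading ideal $\langle \lt(gI)\rangle$. By a classical theorem of Galligo, there exists a Zariski-open dense subset $U\subseteq GL_n(\K)$ on which $\langle \lt(gI)\rangle$ is constant; we denote this common monomial ideal by $\mathrm{gin}(I)$, the generic initial ideal of $I$. So the first step is to replace $I$ by $gI$ for some $g\in U$, which amounts to choosing a generic coordinate system.

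The next step is to show that $\mathrm{gin}(I)$ is \emph{quasi-stable}, i.e.\ for every monomial $x^\alpha\in \mathrm{gin}(I)$ and every index $j>\cls(\alpha)$ dividing $x^\alpha$, some power $x_{\cls(\alpha)}^s\cdot x^\alpha/x_j^{\nu_j}$ lies in $\mathrm{gin}(I)$. In characteristic zero this follows from the fact that $\mathrm{gin}(I)$ is Borel-fixed (invariant under the Borel subgroup of upper-triangular matrices): iterated application of the elementary operations $x_j\mapsto x_i$ for $i<j$ to a monomial in $\mathrm{gin}(I)$, combined with Borel stability, yields the required monomials. Here the choice of $\tdeg$ is crucial, as it respects classes so that the leading term and the multiplicative variables interact consistently.

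Having established quasi-stability, the third step is the purely combinatorial fact that a monomial ideal has a finite Pommaret basis \emph{if and only if} it is quasi-stable. The nontrivial direction is constructed by taking the minimal monomial generators of $\mathrm{gin}(I)$ and completing them with finitely many extra monomials obtained by multiplying each generator by products of its non-multiplicative variables; quasi-stability guarantees that this completion process terminates after finitely many steps, and the resulting finite set covers $\mathrm{gin}(I)$ under Pommaret cones in the sense of Definition~\ref{f,1}.

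Finally, one lifts a finite Pommaret basis $\mathcal{H}$ of the monomial ideal $\mathrm{gin}(I)=\langle\lt(gI)\rangle$ to a finite subset $\widetilde{\mathcal{H}}\subset gI$ whose leading exponents coincide with those of $\mathcal{H}$; by Definition~\ref{f,2} this $\widetilde{\mathcal{H}}$ is a weak Pommaret basis of $gI$. Pulling back through $g^{-1}$ (i.e.\ declaring the coordinates $gx$ to be the new ones) exhibits a coordinate system in which $I$ possesses a finite Pommaret basis, as required. The main technical obstacle is the second step: proving that Borel-fixedness of $\mathrm{gin}(I)$ forces the quasi-stability condition that underlies the Pommaret division. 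Everything else is either standard (Galligo's theorem, Hilbert basis theorem for the lifting) or an explicit termination argument on monomial completions.
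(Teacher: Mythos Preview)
The paper does not give its own proof of this statement: the theorem is quoted from Seiler's monograph \cite[Theorem 4.3.15]{Seiler2010} and used as a black box, chiefly to guarantee the termination of Algorithm~\ref{e,6} in Theorem~\ref{t12}. There is therefore nothing in the paper to compare your argument against.

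That said, your outline is a recognised and essentially correct route to the result over an infinite field of characteristic zero (which covers the paper's setting $\K=\R$ or $\C$): Galligo's theorem supplies a constant generic initial ideal, Borel-fixedness in characteristic zero forces strong stability, strongly stable monomial ideals admit finite Pommaret bases, and lifting is immediate from Definition~\ref{f,2} since only the leading exponents matter. One caution: the paper's convention makes $\cls(\nu)$ the \emph{smallest} nonzero index and declares $\{1,\ldots,\cls(\nu)\}$ multiplicative, which is the mirror image of the usual commutative-algebra convention for Borel moves (trading $x_j$ for a \emph{larger} variable). Your quasi-stability condition and the phrase ``$x_j\mapsto x_i$ for $i<j$'' should be checked carefully against this orientation; as written they appear transposed relative to the paper's conventions. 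You already flag this matching as the main technical obstacle, and it is indeed where errors typically occur. Seiler's own argument in \cite{Seiler2010} proceeds somewhat differently, analysing directly how linear coordinate changes interact with the Pommaret completion, but the $\mathrm{gin}$ approach you sketch is equally valid and arguably more conceptual.
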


\begin{definition}\cite[Definition 3.4.2]{Seiler2010}
Let $\mathcal{F}\subset \K[x]\backslash\{0\}$
be a finite set of polynomials and $L$ be an involutive division on
$\N^n$. We assign to each element $f \in \mathcal{F}$ a set of
multiplicative variables
\[
X_{L,\mathcal{F},\prec}(f)=\{x_i \mid i\in N_{L,\lee_{\prec}
\,\mathcal{F}}(\lee_{\prec} f)\}.
\]
The involutive span of $\mathcal{F}$ is then the set
\[
\langle \mathcal{F} \rangle_{L,\prec} =\sum_{f\in \mathcal{F}}
\K[X_{L,\mathcal{F},\prec}(f)]\cdot  f\subseteq \langle \mathcal{F}
\rangle.
\]
\end{definition}

\begin{theorem}\cite[Theorem 3.4.4]{Seiler2010}\label{t14}
Let $I \subseteq  \K[x]$
be 
a nonzero ideal, $\mathcal{H}\subset I\backslash\{0\}$ a finite set
and $L$ an involutive division on $\N^{n}.$
 Then the following two statements are equivalent. \\
\begin{itemize}
\item [(i)]  The set $\mathcal{H}\subset I$ is a weak involutive basis of $I$ with respect to $L$ and $\prec$.\\
\item [(ii)]  Every polynomial $f \in I$ can be written in the form
\begin{equation}\label{strong involutive basis}
f=\sum_{h \in \mathcal{H} } P_h \cdot h 
\end{equation}
with coefficients $P_h \in \K[X_{L,\mathcal{H},\prec}(h)]$
satisfying $\lt(P_h \cdot h)\preceq \lt(f)$ for all polynomials
$h\in \mathcal{H} $ such that $P_h\neq 0$.
\end{itemize}
$\mathcal{H}$ is a strong involutive basis, if and only if the
representation (\ref{strong involutive basis}) is unique.
\end{theorem}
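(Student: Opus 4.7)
The plan is to mimic the standard Gr\"obner basis argument, using well-foundedness of the term order $\prec$ to drive an induction on $\lt(f)$, and exploiting the definition of the involutive cone $\Cc_{L,\Bb}(\nu)$ to track which variables are allowed in the coefficients $P_h$.

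For (i) $\Rightarrow$ (ii), I would take $f\in I$ and induct on $\lt(f)$. Since $\mathcal{H}$ is a weak involutive basis, $\lee_\prec(f)\in\langle\lee_\prec(\mathcal{H})\rangle_L$, so there exists $h\in\mathcal{H}$ and $\mu\in L(\lee_\prec(h),\lee_\prec(\mathcal{H}))$ with $\lee_\prec(f)=\lee_\prec(h)+\mu$. By Definition 3.4.2, $x^\mu\in\K[X_{L,\mathcal{H},\prec}(h)]$, so subtracting $c\,x^\mu h$ (with $c=\lc_\prec(f)/\lc_\prec(h)$) produces $f'\in I$ with $\lt(f')\prec\lt(f)$. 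Applying the induction hypothesis to $f'$, assembling the resulting expansion with the leading contribution $c\,x^\mu h$ gives a representation of the desired form, and the bound $\lt(P_h\cdot h)\preceq\lt(f)$ is preserved because each summand introduced has leading term $\preceq\lt(f)$.

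For (ii) $\Rightarrow$ (i), I need to show $\langle\lee_\prec(I)\rangle\subseteq\langle\lee_\prec(\mathcal{H})\rangle_L$ (the reverse inclusion is automatic). Pick any $f\in I$ and write $f=\sum_{h}P_h\cdot h$ with the stated constraints. Among the nonzero summands, at least one must satisfy $\lt(P_h\cdot h)=\lt(f)$, for otherwise all leading terms would strictly precede $\lt(f)$ and could not sum to $\lt(f)$ after the highest terms cancel (since the monomials of equal maximal $\prec$-value would have to cancel and thus not produce $\lt(f)$). For that $h$, $\lee_\prec(P_h)$ is supported on $N_{L,\lee_\prec(\mathcal{H})}(\lee_\prec(h))$, hence $\lee_\prec(f)=\lee_\prec(P_h)+\lee_\prec(h)\in\Cc_{L,\lee_\prec(\mathcal{H})}(\lee_\prec(h))$. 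Thus every leading exponent arising from $I$ lies in $\langle\lee_\prec(\mathcal{H})\rangle_L$, which is exactly the weak involutive basis condition.

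For the strong case, uniqueness of (\ref{strong involutive basis}) is equivalent to disjointness of the cones $\Cc_{L,\lee_\prec(\mathcal{H})}(\lee_\prec(h))$ together with the requirement that distinct elements of $\mathcal{H}$ have distinct leading exponents. In one direction, if some $\mu$ lies in two cones $\Cc_L(\lee_\prec(h_1))\cap\Cc_L(\lee_\prec(h_2))$ with $h_1\neq h_2$, then $x^{\mu-\lee_\prec(h_1)}h_1$ and a suitable scalar multiple of $x^{\mu-\lee_\prec(h_2)}h_2$ differ by a polynomial with strictly smaller leading term, which by the weak part can be re-expanded, producing two distinct involutive representations of the same polynomial. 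Conversely, disjointness forces the choice of $h$ at each step of the reduction in the (i)$\Rightarrow$(ii) argument to be uniquely determined by $\lt(f)$, so induction on $\lt(f)$ yields uniqueness. The main technical obstacle is the bookkeeping in the non-uniqueness construction: one must be careful that the lower-order correction produced when combining two cone-witnesses itself expands involutively without reintroducing the same ambiguity, which is handled by descending on $\prec$ and invoking Definition~\ref{t3}~(i) to ensure that lower cones nest consistently.
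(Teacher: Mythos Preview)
The paper does not supply its own proof of this theorem; it is quoted verbatim from \cite[Theorem 3.4.4]{Seiler2010} as background. So there is no ``paper's proof'' to compare against here. Your argument is essentially the standard one found in Seiler's monograph: Noetherian induction on $\lt(f)$ for (i)$\Rightarrow$(ii), and reading off the involutive cone containing $\lee_\prec(f)$ from a top-degree summand for (ii)$\Rightarrow$(i). That part is correct.

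Two small points. First, in your final sentence you cite ``Definition~\ref{t3}~(i)'' to control the nesting of cones, but in this paper \ref{t3} is the definition of the $\mathcal{A}$-radical; you want Definition~\ref{indef}~(i). Second, your uniqueness argument in the strong case (``disjointness forces the choice of $h$ at each step to be unique, so induction gives uniqueness'') only shows that the \emph{reduction procedure} is deterministic, not that an arbitrary representation of the form~(\ref{strong involutive basis}) must coincide with it. The clean way to close this is to subtract two such representations: if $\sum_h (P_h-Q_h)h=0$ with some $P_h\neq Q_h$, then the leading exponents $\lee_\prec\bigl((P_h-Q_h)h\bigr)$ lie in pairwise disjoint cones (by strongness and the distinct-leading-exponent condition), hence are pairwise distinct, so the maximal one cannot cancel --- a contradiction. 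This is a minor gap, easily filled.
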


\begin{corollary}\cite[Corollary 3.4.5]{Seiler2010}\label{t16}
Let $\mathcal{H}$ be a weak involutive basis of the ideal
$I \subseteq \K[x]$.
 Then $\langle\mathcal{H}\rangle_{L,\prec}=I$.
If $\mathcal{H}$ is even a strong involutive basis of $I$, then $I$
considered as a $\K$-linear space possesses a direct sum
decomposition $I=\bigoplus_{h \in
\mathcal{H}}\K[X_{L,\mathcal{H},\prec}(h)] \cdot h$.
\end{corollary}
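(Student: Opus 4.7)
The plan is to read both parts of the statement directly off Theorem \ref{t14}.

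For the first assertion, the inclusion $\langle\mathcal{H}\rangle_{L,\prec}\subseteq I$ is immediate from the definition of the involutive span: each summand $\K[X_{L,\mathcal{H},\prec}(h)]\cdot h$ is contained in $\langle\mathcal{H}\rangle\subseteq I$ because $\mathcal{H}\subset I$ and $I$ is an ideal. For the reverse inclusion, I would invoke Theorem \ref{t14}(ii): since $\mathcal{H}$ is a weak involutive basis, every $f\in I$ admits a representation $f=\sum_{h\in \mathcal{H}}P_h\cdot h$ with $P_h\in \K[X_{L,\mathcal{H},\prec}(h)]$. By the very definition of $\langle\mathcal{H}\rangle_{L,\prec}$, this exhibits $f$ as an element of the involutive span, so $I\subseteq \langle\mathcal{H}\rangle_{L,\prec}$. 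Combining the two inclusions yields the claimed equality.

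For the second assertion, suppose $\mathcal{H}$ is a strong involutive basis. By the first part we already have $I=\sum_{h\in\mathcal{H}}\K[X_{L,\mathcal{H},\prec}(h)]\cdot h$ as a $\K$-linear sum. To upgrade this to a direct sum, I need to check that the zero element has only the trivial representation, i.e.\ that $\sum_{h\in\mathcal{H}}P_h\cdot h=0$ with $P_h\in \K[X_{L,\mathcal{H},\prec}(h)]$ forces $P_h=0$ for every $h\in\mathcal{H}$. This is exactly the uniqueness clause of Theorem \ref{t14}: under the strong involutive basis hypothesis, the representation in \eqref{strong involutive basis} is unique. Since $0$ trivially has the representation in which every coefficient vanishes, uniqueness forces any other such representation to coincide with the trivial one. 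Therefore the sum is direct, giving
\[
I=\bigoplus_{h\in\mathcal{H}}\K[X_{L,\mathcal{H},\prec}(h)]\cdot h,
\]
which is the desired decomposition.

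The proof is essentially a bookkeeping exercise built on top of Theorem \ref{t14}; there is no real obstacle because the existence and uniqueness statements in that theorem are precisely what the two parts of the corollary require. The only subtlety worth double-checking is that the coefficient rings $\K[X_{L,\mathcal{H},\prec}(h)]$ used in defining the involutive span coincide with those appearing in the representation of Theorem \ref{t14} — this matches, since both use the same multiplicative variables $X_{L,\mathcal{H},\prec}(h)$ attached to each $h\in \mathcal{H}$.
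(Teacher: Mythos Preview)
The paper does not give its own proof of this corollary; it is quoted directly from \cite[Corollary 3.4.5]{Seiler2010} and used as a black box later in Section~\ref{justification}. Your derivation from Theorem~\ref{t14} is correct and is precisely the intended argument: existence of the multiplicative representation in Theorem~\ref{t14}(ii) gives $I\subseteq\langle\mathcal{H}\rangle_{L,\prec}$, while the reverse inclusion is immediate, and the uniqueness clause for strong bases yields directness. One small technical point: invoking uniqueness for $f=0$ is slightly delicate because the side condition $\lt(P_h\cdot h)\preceq\lt(f)$ in \eqref{strong involutive basis} is not meaningful when $f=0$, so a hypothetical nontrivial null combination need not formally qualify as a ``representation'' in the sense of Theorem~\ref{t14}. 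The clean fix is to observe that for a strong involutive basis the involutive cones $\Cc_{L,\lee_\prec\mathcal{H}}(\lee_\prec h)$ are pairwise disjoint, so the leading terms $\lt(P_h h)$ for distinct $h$ with $P_h\neq 0$ are all different; hence the maximal one cannot cancel and the sum cannot vanish. This is a one-line patch and does not affect the substance of your argument.
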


\begin{proposition}\cite[Proposition
3.4.7]{Seiler2010}\label{stronginvolutive} Let $I\subseteq\K[x]$ be
an ideal and $\mathcal{H}\subset I$ be a weak involutive basis of
$I$ for the involutive division $L$. Then there exists a subset
$\mathcal{H}^{'}\subseteq\mathcal{H}$ which is a strong involutive
basis of I.
\end{proposition}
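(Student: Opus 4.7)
The plan is to prune $\mathcal{H}$ in two stages: first discard polynomials that duplicate leading exponents of others, then thin out the surviving set at the monoid level until the involutive cones of the leading exponents become pairwise disjoint. Stage 1 is immediate: whenever $h_1\neq h_2$ in $\mathcal{H}$ satisfy $\lee_{\prec}(h_1)=\lee_{\prec}(h_2)$, discard one. The leading exponent set is unchanged, so the pruned $\mathcal{H}_1$ is still a weak involutive basis of $I$, and by construction two distinct elements of $\mathcal{H}_1$ now have distinct leading exponents. Set $\Bb:=\lee_{\prec}(\mathcal{H}_1)\subset\N^n$; then $\bigcup_{\nu\in\Bb}\Cc_{L,\Bb}(\nu)=\langle\Bb\rangle=\lee_{\prec}(I)$.

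Stage 2 operates purely on $\Bb$. As long as two involutive cones meet, axiom (i) of Definition \ref{indef} yields $\Cc_{L,\Bb}(\mu)\subseteq\Cc_{L,\Bb}(\nu)$ for some $\mu\neq\nu$ in $\Bb$. The inclusion is automatically strict, because distinct multi indices cannot have both $\mu-\nu$ and $\nu-\mu$ in $\N^n$, so distinct generators cannot produce identical cones. Since $\mu\in\Cc_{L,\Bb}(\nu)=\nu+L(\nu,\Bb)$, the multi index $\nu$ divides $\mu$ componentwise, so the monoid ideal is unchanged by removal: $\langle\Bb\setminus\{\mu\}\rangle=\langle\Bb\rangle$. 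Axiom (ii) of Definition \ref{indef} ensures that on the smaller set $\Bb':=\Bb\setminus\{\mu\}$ the multiplicative indices only grow, whence
\[
\bigcup_{\rho\in\Bb'}\Cc_{L,\Bb'}(\rho) \;\supseteq\; \bigcup_{\rho\in\Bb'}\Cc_{L,\Bb}(\rho) \;=\; \bigcup_{\rho\in\Bb}\Cc_{L,\Bb}(\rho) \;=\; \langle\Bb\rangle \;=\; \langle\Bb'\rangle,
\]
using $\Cc_{L,\Bb}(\mu)\subseteq\Cc_{L,\Bb}(\nu)$ in the second equality. Combined with the trivial reverse inclusion, $\Bb'$ is still weakly involutive. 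Iterate; since $|\Bb|$ is a strictly decreasing nonnegative integer, the process terminates at a weakly involutive set whose cones are pairwise disjoint, which is exactly the strong involutive basis condition of Definition \ref{f,1}.

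Let $\mathcal{H}'\subseteq\mathcal{H}_1\subseteq\mathcal{H}$ be the polynomials whose leading exponents survive Stage 2. Then $\lee_{\prec}(\mathcal{H}')$ is a strong involutive basis of $\lee_{\prec}(I)$ and its elements still have pairwise distinct leading exponents, so $\mathcal{H}'$ is a strong involutive basis of $I$ by Definition \ref{f,2}. The main subtlety is Stage 2: axiom (ii) allows the involutive cones themselves to change whenever a generator is removed, so one must check simultaneously that the pruning preserves coverage of $\langle\Bb\rangle$ and that axiom (i) keeps supplying a removable cone at the next iteration. Both points are handled by the display above together with the fact that axiom (i) applies to every finite subset of $\N^n$, so no additional technical machinery is needed.
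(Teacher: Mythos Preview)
The paper does not supply its own proof of this proposition; it is merely quoted from \cite[Proposition 3.4.7]{Seiler2010}. Your argument is correct and is essentially the standard one found there: prune duplicates of leading exponents, then repeatedly drop a generator whose involutive cone is absorbed by another, using axiom (i) of Definition \ref{indef} to find such a generator and axiom (ii) to guarantee that the remaining cones still cover $\langle\Bb\rangle$ after each deletion.
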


%

\begin{definition}
If we regard $\K[x]$ as a linear space, then the ideal $I$ and the
truncated ideal $I_t = I\cap \K[x]_t$ are both subspaces in $\K[x]$.
We say that the set $G=\{g_1,\ldots,g_s\}$
 is a reduced basis of  $I_t$, if it is a linear independent basis of $I_t$ and
 all polynomials in $G$ have different leading monomials with respect to
 a given term order.
\end{definition}

\section{computing a pommaret basis
}\label{mainresults}
 In this section, we present an algorithm  as well as a certificate
 for
computing a Pommaret basis for an ideal $J$, s.t.
 $I \subseteq J \subseteq I(V_{\R}(I))$ when $V_{\R}(I)$ is positive
 dimensional. The certificate given in (\ref{e,5}) generalizes the flat extension
 conditions in \cite{LLR08}
  for the zero-dimensional real variety  to the
 positive dimensional case.

\subsection{The certificate }
Let $I=\langle h_1, \ldots,h_m\rangle \subseteq \R[x]$ be an ideal
and $d:=\max_{1\leq j\leq m}d_j$, $d_j:=\lceil\deg(h_j)/2\rceil$.
 For each $t\geq d$, recall the notions

\begin{equation*}
    \Kk_t:=\{y\in\R^{\N^n_{2t}}\mid y_0=1, M_t(y)\succeq0, M_{t-d_j}(h_jy)=0,
    j=1,\ldots,m\},
\end{equation*}
and
\begin{equation*}
\Kk_t^{gen}:=\{ y \in \Kk_t \mid \rank  \, M_t(y) ~{\rm is ~maximum~
over}~ \Kk_t\}.
\end{equation*}

For a moment matrix $M_t(y)$ of order $t$, the truncated moment
matrix $M_{t-\ell}(y)$ for $\ell < t$ is the order $t-\ell$
principal submatrix of $M_t(y)$ indexed by $\alpha,
\beta\in\N_{t-\ell}^n.$

Let $\alpha_j$ denote the number of class $j$ polynomials of degree
$t-2$ in the reduced basis of $\ker M_{t-2}(y)$.
 We have the
following theorem:

\begin{theorem}\label{maintheorem}
Let $I=\langle h_1,\ldots,h_m\rangle\subseteq\R[x]$
be an ideal. If
there exists an integer  $t\geq 2d$ satisfying
\begin{equation}
\label{e,5} \sum_{j=1}^n j\alpha_{j} ~\text{for} ~\ker M_{t-2}(y)=
\crk \, M_{t-1}(y)-\crk \, M_{t-2}(y),
\end{equation}
for   an element  $y\in \Kk_t^{gen}$.  Then a reduced basis of the
null space of $ M_{t-2}(y)$ is a weak Pommaret basis for $J=\langle
\ker M_{t-2}(y) \rangle$ under the monomial ordering $\tdeg$ and
\begin{equation}
 I\subseteq \langle J \rangle \subseteq I(V_{\R}(I)), ~V_\R(I)=V_\C(J) \cap
 \R^n.
\end{equation}
\end{theorem}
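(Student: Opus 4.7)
The plan is to establish three items in sequence: the inclusion $J \subseteq I(V_\R(I))$, the weak Pommaret basis property of a reduced basis $G$ of $\ker M_{t-2}(y)$ via Cartan's involutivity test, and finally $I \subseteq J$ together with the variety equality. The inclusion $J \subseteq I(V_\R(I))$ is immediate from Proposition \ref{t2}(i): the genericity $y \in \Kk_t^{gen}$ gives $\ker M_{t-2}(y) \subseteq \sqrt[\R]{I}$, and since $\sqrt[\R]{I} = I(V_\R(I))$ is an ideal, $J = \langle \ker M_{t-2}(y)\rangle \subseteq I(V_\R(I))$.

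The crux is verifying that $G$ is a weak Pommaret basis of $J$ under $\tdeg$. For any $b \in \ker M_{t-2}(y) \subseteq \ker M_t(y)$ and any variable $x_i$, the degree bound $\deg(x_i b) \leq t-1$ together with Proposition \ref{t1}(i) applied at level $t$ places $x_i b$ in $\ker M_t(y)$, and Proposition \ref{t1}(iii) then places it in $\ker M_{t-1}(y)$. Hence
\[
V := \ker M_{t-2}(y) + \sum_{i=1}^n x_i\, \ker M_{t-2}(y) \subseteq \ker M_{t-1}(y).
\]
A dimension count restricted to Pommaret multiplicative variables --- each degree-$(t-2)$ element of class $j$ in the reduced basis contributes at most $j$ new degree-$(t-1)$ polynomials $x_1 b, \ldots, x_j b$, while elements of strictly smaller degree contribute no new dimensions beyond those already sitting in $\ker M_{t-2}(y)$ --- yields
\[
\dim V - \dim \ker M_{t-2}(y) \leq \sum_{j=1}^n j\, \alpha_j.
\]
Hypothesis (\ref{e,5}) forces equality, so $V = \ker M_{t-1}(y)$. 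This is precisely Cartan's single-prolongation test for the Pommaret division. By the propagation results in \cite{Seiler2010}, this single-level test extends to all higher degrees, and therefore $G$ is weakly involutive for $P$ under $\tdeg$; by the remark following Definition \ref{f,2} it is also a Gr\"obner basis of $J$.

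For $I \subseteq J$ and the variety equality, the condition $M_{t-d_j}(h_j y) = 0$ for $y \in \Kk_t$, read off on its zeroth row, gives $h_j \in \ker M_{t-d_j}(y)$, which by Proposition \ref{t1}(iii) sits inside $\ker M_t(y)$. Applying Theorem \ref{t14} to the weak Pommaret basis $G$ yields an involutive representation $h_j = \sum_{h \in G} P_h\, h \in J$. Taking real varieties in the chain $I \subseteq J \subseteq I(V_\R(I))$ gives $V_\R(I) \supseteq V_\R(J) \supseteq V_\R(I(V_\R(I))) = V_\R(I)$, so $V_\R(I) = V_\R(J) = V_\C(J) \cap \R^n$.

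The main obstacle is the dimension-count bound underlying Cartan's test --- justifying that degree-$(t-2)$ basis elements of class $j$ contribute at most $j$ independent prolongations and that strictly lower-degree elements contribute none --- which hinges on the reduced structure of $G$, the class-respecting property of $\tdeg$, and on tracing how prolongations from earlier iterations already saturate the lower-degree part. The invocation of Seiler's theorem to propagate single-level involutivity to all higher levels is also nontrivial but standard in the involutive basis literature, provided the coordinate system is $\delta$-regular as assumed in the surrounding development.
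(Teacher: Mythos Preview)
Your overall structure is right, but two steps in the Pommaret-basis argument need repair, and one of them is exactly where the paper does the real work.

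\textbf{The dimension count is argued in the wrong direction.} You define $V=\ker M_{t-2}(y)+\sum_i x_i\ker M_{t-2}(y)$ with \emph{all} prolongations and then claim $\dim V-\dim\ker M_{t-2}(y)\le\sum_j j\alpha_j$ by ``restricting to multiplicative variables.'' But each degree-$(t-2)$ basis element $b$ contributes $n$ prolongations to the spanning set of $V$, not $\cls(b)$; counting only the multiplicative ones undercounts the spanning set and cannot bound $\dim V$ from above. The paper's Lemma~\ref{t7} runs the argument the other way: the multiplicative prolongations $x_ig_k$ with $i\le\cls(g_k)$ and $\deg g_k=t-2$ have pairwise \emph{distinct} leading terms (this is where the class-respecting property of $\tdeg$ enters), so they give $\sum_j j\alpha_j$ linearly independent degree-$(t-1)$ elements of $\ker M_{t-1}(y)$. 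Since by~(\ref{e,5}) that equals the total number of degree-$(t-1)$ elements in a reduced basis of $\ker M_{t-1}(y)$, the set $\{x_ig_k:i\le\cls(g_k)\}\cup G$ is a reduced basis of $\ker M_{t-1}(y)$. You need this lower bound, not an upper bound, to force the equality.

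\textbf{The propagation step is where your proof and the paper's diverge.} The paper does not invoke a black-box from \cite{Seiler2010}; Theorem~\ref{t8} proves directly, by induction on $\lt(f)$, that every $x^\mu g_j$ admits a representation $\sum_k h_kg_k$ with $h_k\in\R[x_1,\ldots,x_{\cls(g_k)}]$. The inductive step rewrites a non-multiplicative factor $x_{i_1}$ using Lemma~\ref{t7}, then tracks that the unique ``top'' summand has $\lt(g_{j_1})$ strictly larger than $\lt(g_j)$ in $\tdeg$, so the process terminates. Your appeal to ``propagation results'' can be made rigorous by citing the local-involution criterion (a finite set $\mathcal H$ is a weak involutive basis of $\langle\mathcal H\rangle$ iff every non-multiplicative prolongation $x_ih$ lies in the involutive span $\langle\mathcal H\rangle_{L,\prec}$); once Lemma~\ref{t7} is established, each $x_ig_k\in\ker M_{t-1}(y)$ indeed has such a representation with the required leading-term bound. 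That would be a legitimate alternative to the paper's explicit induction, but ``Cartan's single-prolongation test'' and an unspecified citation do not constitute a proof.

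Minor point: $M_{t-d_j}(h_jy)=0$ does not give $h_j\in\ker M_{t-d_j}(y)$, since $\deg h_j$ may exceed $t-d_j$. What it gives is $L_y(h_jx^\alpha)=0$ for all $|\alpha|\le 2(t-d_j)$; since $t\ge 2d\ge 2d_j$ this covers $|\alpha|\le t$, whence $h_j\in\ker M_t(y)$ (using $M_t(y)\succeq 0$). The rest of your argument for $I\subseteq J$ and the variety equality is fine.
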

The proof of Theorem \ref{maintheorem} follows from  Proposition
\ref{t2} and  Theorem \ref{t8} whose proofs are given in Section
\ref{justification}.

\begin{remark}\label{t11}
Although the reduced bases of $\ker M_{t-2}(y)$ are not unique, they
have the same set of leading terms since they can be represented
linearly by each other. Therefore, each reduced basis of $\ker
M_{t-2}(y)$ has the same value of  $\sum_{j=1}^n j\alpha_{j}$.
\end{remark}

In our algorithm, we need to find an element  $y$ in $\Kk_t$
maximizing the rank of $M_t(y)$. As pointed out in \cite{LLR08},
this could be done typically by solving the semidefinite program
\begin{align}\label{e,11}
\min \quad 0 \quad {\text s.t.} \quad y \in \Kk_t
\end{align}
with interior-point algorithms using self-dual embedding, see
\cite{VB96,WSV00}.

\subsection{An algorithm for computing a Pommaret basis}
We list main steps of our algorithm based on solving (\ref{e,11})
for computing a Pommaret basis of the ideal $J= \langle \ker
M_{t-2}(y) \rangle$  nested between $I$ and $I(V_\R(I))$.

\begin{algorithm}\label{e,6} Computing a Pommaret basis of an ideal
$J$ such that
 $I \subseteq J \subseteq I(V_{\R}(I))$.

\textbf{Input:} A set of polynomials $\{h_1,\ldots,h_m\}$ generating
$I$ and the monomial ordering
 $\tdeg$ on variables $x_1,\ldots,x_n$.

\textbf{Output:} A Pommaret basis for $\langle\ker
M_{t-2}(y)\rangle$ under the monomial ordering $\tdeg$.

\begin{description}

    \item [Step~1] For $t\geq 2d$,  compute  a generic element $y\in\Kk_t$ by solving
    (\ref{e,11}).

    \item [Step~2] Compute a reduced basis of $\ker M_{t-1}(y)$. Let $\{g_1, \ldots, g_{s+t}\}$ be   polynomials
    of degree $t-2$ 
    in this reduced basis.  Compute the  value of $\sum_{j=1}^n j\alpha_{j}$. 

    \item [Step~3] 
Compute $\crk \, M_{t-1}(y)-\crk \, M_{t-2}(y)$ by calculating the
number of polynomials of degree $t-1$ in the reduced basis of $\ker
M_{t-1}(y)$. 

    \item [Step~4]  Test whether the condition (\ref{e,5}) is satisfied.

    \begin{itemize}

    \item  If yes, $\{g_1, \ldots, g_{s+r}\}$ is
 a weak  Pommaret basis for $\langle \ker M_{t-2}(y)
    \rangle$ and can be  
 reduced further to a (strong) Pommaret basis.  
%
%

   \item  Otherwise, let $t:=t+1$ and go to Step 1.
   \end{itemize}
\end{description}
\end{algorithm}

In  Section \ref{justification}, we prove that Algorithm \ref{e,6}
is correct and terminates in a finite number of steps in a $\delta$-regular coordinate system for $\sqrt[\R]{I}$.
 The algorithm  has been implemented in Matlab
using the GloptiPoly toolbox \cite{gloptipoly} and we demonstrate
its performance on a set of  examples  in Section \ref{examples}.

\begin{remark}
In order to check the condition (\ref{e,5}), we need to compute a
reduced basis of the  null space of the truncated moment matrix
$M_{t-1}(y)$.
These computations have to be performed stably. 
For the computation of a reduced basis 
it is 
important to choose a proper tolerance to ensure that there is no
information missing in  $\ker M_{t-1}(y)$. We list the tolerance
used for each example in Section 4.
\end{remark}

\subsection{Justification of the certificate}\label{justification}

Our main goal in this section is to prove that Algorithm \ref{e,6}
is correct and it terminates after a finite number of steps in a $\delta$-regular coordinate system for $\sqrt[\R]{I}$.

\begin{assumption}\label{assump}
Let $I=\langle h_1,\ldots,h_m\rangle\subseteq\R[x]$ be an ideal.
Suppose there exists an integer  $t\geq 2d$
satisfying the condition
(\ref{e,5}) for  $y\in \Kk_t^{gen}$. Let \{$g_1,\ldots,g_{s+r}$\} be
a reduced basis of  $\ker M_{t-2}(y)$, where
\[ \deg(g_i)=t-2 ~{\it for}~ 1\leq i \leq s,
  ~{\it and }~ \deg(g_i)<t-2 ~{\it for}~ s+1\leq i \leq s+r.\]
%
\end{assumption}

\begin{lemma}\label{t7}
Under Assumption \ref{assump}, the polynomial set
\[
\{x_1g_1,\ldots,x_{j_1}g_1,\ldots,x_1g_s,\ldots,x_{j_s}g_s,g_1,\ldots,g_{s+r}\}
\]
is a reduced basis of $\ker M_{t-1}(y)$, where   $j_i=\cls (g_i)$
for $i=1,\ldots,s$.
\end{lemma}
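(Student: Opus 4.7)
My plan is to verify that the proposed polynomial set, call it $\mathcal{G}$, is a reduced basis of $\ker M_{t-1}(y)$ by establishing three things in sequence: (a) every element of $\mathcal{G}$ lies in $\ker M_{t-1}(y)$; (b) the leading monomials of the elements of $\mathcal{G}$ are pairwise distinct, which already forces linear independence; and (c) $|\mathcal{G}| = \dim \ker M_{t-1}(y) = \crk M_{t-1}(y)$. Together with the definition of reduced basis given at the end of Section~\ref{preliminary}, these three facts give the lemma.

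For (a), each $g_i$ already lies in $\ker M_{t-2}(y) \subseteq \ker M_{t-1}(y)$ by Proposition~\ref{t1}(iii). For $i \leq s$ and $1 \leq k \leq j_i$, Proposition~\ref{t1}(iii) at order $t$ gives $g_i \in \ker M_t(y)$; then $\deg(x_k g_i) = t-1$ lets me invoke Proposition~\ref{t1}(i) at order $t$ to obtain $x_k g_i \in \ker M_t(y)$, and a further application of Proposition~\ref{t1}(iii) yields $x_k g_i \in \ker M_{t-1}(y)$. For (c), I count directly: $\mathcal{G}$ has $\sum_{i=1}^s j_i + (s+r) = \sum_{j=1}^n j\alpha_j + (s+r)$ elements, and $s+r = \crk M_{t-2}(y)$ because $\{g_1,\ldots,g_{s+r}\}$ is a reduced basis of $\ker M_{t-2}(y)$; the certificate~(\ref{e,5}) then collapses this count to $\crk M_{t-1}(y)$.

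The step I expect to be the crux is (b), which is where the Pommaret structure is actually used. The $g_i$'s have pairwise distinct leading monomials by the reduced-basis hypothesis, and each new polynomial $x_k g_i$ has a leading monomial of total degree $t-1$, strictly greater than every $\deg(g_j) \leq t-2$, so the two groups cannot collide. Within the new group, I must show that for $(i,k) \neq (i',k')$ with $k \leq j_i$ and $k' \leq j_{i'}$ one still has $x_k \lt(g_i) \neq x_{k'} \lt(g_{i'})$. Writing $\lt(g_i) = x^{\nu}$ and $\lt(g_{i'}) = x^{\mu}$, the subcase $k = k'$ immediately forces $\nu = \mu$ and hence $i = i'$ by reducedness. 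The subcase $k < k'$ is the real content: the $k$-th coordinate of $\nu + e_k = \mu + e_{k'}$ gives $\mu_k = \nu_k + 1 \geq 1$, which forces $j_{i'} = \cls(\mu) \leq k$; combined with the multiplicativity condition $k' \leq j_{i'}$ this yields $k' \leq j_{i'} \leq k < k'$, a contradiction. The case $k > k'$ follows by symmetry. Once (b) is settled, linear independence together with the matching dimension count from (a) and (c) completes the argument.
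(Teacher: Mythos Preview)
Your proof is correct and follows essentially the same route as the paper's: membership via Proposition~\ref{t1}, distinctness of leading terms, and a dimension count using condition~(\ref{e,5}). The only cosmetic difference is that in step~(b) you give a direct coordinate argument for the distinctness of the $x_k\lt(g_i)$, whereas the paper appeals to the involutive-division property of the Pommaret division (Definition~\ref{indef}) together with the fact that all $\lt(g_i)$ have the same degree; both arguments unwind to the same inequality $k' \le \cls(\mu) \le k$.
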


\begin{proof}
For $k=1,\ldots,n$, $i=1,\ldots,s+r$, since $\deg(x_kg_i)\leq t-1$,
by Proposition \ref{t1} (i), we have $x_kg_i\in\ker M_{t-1}(y)$.
 In
fact, since each polynomial in $\{g_1,\ldots,g_{s+r}\}$ has
different leading terms, according to Definition \ref{indef},
 the polynomials
\begin{equation}\label{xigj} x_1 g_1, \ldots,
x_{j_1} g_1, \ldots, x_1 g_s, \ldots, x_{j_s} g_s
\end{equation}
all have distinct leading terms of degree $t-1$. Hence they are
linearly independent.  Suppose there are $\alpha_{j}$ polynomials of
class $j$ in $\{g_1,\ldots,g_{s}\}$, then polynomials in
(\ref{xigj})
 yield $\sum_{j=1}^n j\alpha_{j}$ linearly independent polynomials
of degree $t-1$ in $\ker M_{t-1}(y)$. On the other hand, the number
of linearly independent polynomials of degree $t-1$ in a reduced basis of
$\ker M_{t-1}(y)$ equals to $\crk \, M_{t-1}(y)-\crk \,
M_{t-2}(y)$. Hence,  the condition
(\ref{e,5}) and Proposition \ref{t1} (iii)
 implies that the conclusion is
true.
\end{proof}

\begin{remark}\label{t15}
Under Assumption \ref{assump}, for any polynomial $f\in\ker
M_{t-1}(y)$,
we can express it as a linear
combination:
\begin{equation}\label{rem4.5}
f=\sum_{k=1}^{s}\sum_{i=1}^{\cls(g_{k})}c_{ik}x_ig_k+\sum_{k=1}^{s+r}\lambda_{k}g_{k},
\end{equation}
where $c_{ik}\in\R$  and
 $\lt(c_{ik}x_ig_k) \ptdeg \lt(f)$
for $1 \leq i \leq \cls(g_{k})$ and $ 1 \leq k \leq s$,
$\lambda_k\in\R$ and  $\lt( \lambda_k g_k) \ptdeg \lt(f)$ for $ 1
\leq k \leq s+r$. Note that every polynomial in
$\{x_1g_1,\ldots,x_{j_1}g_1,\ldots,x_1g_s,\ldots,x_{j_s}g_s,g_1,\ldots,g_{s+r}\}$
has a different leading term. Under the graded monomial ordering
$\tdeg$, there is only one $c_{i_0k_0}\neq0$ with
$\lt(x_{i_0}g_{k_0})=\lt(f)$  if not all $c_{ik}$ are zeros.
 This property is very important
and will be used in the proofs of  theorems below.
\end{remark}

\begin{lemma}\label{t10}
Under Assumption \ref{assump}, for all monomial $x^\mu$ and
polynomials $ g_{j}$ with $\deg(g_j)<t-2$, $j=s+1,\ldots,s+r$,
 the polynomial $ x^\mu g_{j}$ can be expressed as
\begin{equation}\label{e,8} x^\mu
g_{j}=\sum_{k=1}^{s}h_{k}g_k+\sum_{k=s+1}^{s+r}{\lambda_{k}}g_{k},
\end{equation}
where  $h_{k}\in\R[x]$ and $\lambda_k \in \R $ satisfying
$\lt(h_{k}g_k)\preceq_{\text{\upshape tdeg}} \lt(x^\mu
g_{j}), \ k=1,\ldots,
s$ and $\lt(\lambda_k g_k)\preceq_{\text{\upshape tdeg}} \lt(x^\mu
g_{j}),\
k=s+1,\ldots,s+r$.
\end{lemma}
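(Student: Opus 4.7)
The plan is to proceed by induction on the total degree $|\mu|$. The base case $|\mu|=0$ is trivial: take $\lambda_j=1$, $\lambda_k=0$ for $k\in\{s+1,\ldots,s+r\}\setminus\{j\}$, and $h_k=0$ for all $k\in\{1,\ldots,s\}$, which clearly satisfies the leading-term conditions since $\lt(\lambda_j g_j)=\lt(g_j)=\lt(x^\mu g_j)$.

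For the inductive step with $|\mu|\geq 1$, I would factor $x^\mu=x_i x^{\mu'}$ and apply the induction hypothesis to obtain
\begin{equation*}
x^{\mu'} g_j = \sum_{k=1}^s h'_k g_k + \sum_{k=s+1}^{s+r} \lambda'_k g_k
\end{equation*}
with the prescribed leading-term bounds relative to $\lt(x^{\mu'} g_j)$. Multiplying by $x_i$ gives $x^\mu g_j = \sum_{k=1}^s (x_i h'_k) g_k + \sum_{k=s+1}^{s+r} \lambda'_k (x_i g_k)$. The first sum is already in the desired form. The main task is to re-expand each $x_i g_k$ (for $s+1\leq k\leq s+r$) as a constant combination of $g_1,\ldots,g_{s+r}$.

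The crucial observation is that $x_i g_k\in\ker M_{t-2}(y)$ whenever $\deg(g_k)<t-2$. Since $\deg(g_k)\leq t-3$, Proposition \ref{t1}(iii) gives $g_k\in\ker M_{t-1}(y)$; since $\deg(x_i g_k)\leq t-2$, Proposition \ref{t1}(i) applied to $M_{t-1}(y)$ then yields $x_i g_k\in\ker M_{t-1}(y)$; a final application of Proposition \ref{t1}(iii) gives $x_i g_k\in\ker M_{t-2}(y)$. We may therefore write $x_i g_k = \sum_{\ell=1}^{s+r} c_{\ell,k,i}\, g_\ell$ with constants $c_{\ell,k,i}\in\R$; because the reduced basis has pairwise distinct leading monomials, every nonzero summand automatically satisfies $\lt(c_{\ell,k,i}\, g_\ell)\preceq_{\tdeg}\lt(x_i g_k)$.

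Substituting these expansions and collecting like terms produces the claimed decomposition with
\begin{equation*}
h_\ell = x_i h'_\ell + \sum_{k=s+1}^{s+r} \lambda'_k c_{\ell,k,i} \quad (1\leq \ell\leq s), \qquad \lambda_\ell = \sum_{k=s+1}^{s+r} \lambda'_k c_{\ell,k,i} \quad (s+1\leq \ell\leq s+r).
\end{equation*}
Verifying the leading-term constraints reduces to combining two bounds: $\lt(x_i h'_k g_k)=x_i\lt(h'_k g_k)\preceq_{\tdeg} x_i\lt(x^{\mu'} g_j)=\lt(x^\mu g_j)$ from the induction hypothesis, and $\lt(c_{\ell,k,i} g_\ell)\preceq_{\tdeg}\lt(x_i g_k)\preceq_{\tdeg} x_i\lt(x^{\mu'} g_j)=\lt(x^\mu g_j)$ for $k\geq s+1$ with $\lambda'_k\neq 0$. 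The principal technical point is the bookkeeping for the re-expansion step; once one observes that in any reduced basis with distinct leading monomials each summand in the unique expansion of a polynomial $f$ has leading term dominated by $\lt(f)$, the rest of the argument is routine.
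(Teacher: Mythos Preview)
Your argument is correct, and in fact it is somewhat more elementary than the paper's.  The paper proceeds by a case distinction on whether $\deg(x^{\mu}g_j)\leq t-1$; in the small-degree case it invokes Lemma~\ref{t7} and Remark~\ref{t15} to expand $x^{\mu}g_j$ in the explicit basis of $\ker M_{t-1}(y)$ produced there, and in the large-degree case it peels off a factor $x^{\mu_2}$ bringing the degree down to $t-1$, expands, and iterates on the remaining factor $x^{\mu_1}$.  Both of these steps go through Lemma~\ref{t7}, which in turn relies on the certificate condition~(\ref{e,5}).  Your induction on $|\mu|$ bypasses Lemma~\ref{t7} entirely: the only structural fact you use is that $x_i g_k\in\ker M_{t-2}(y)$ whenever $s+1\leq k\leq s+r$, which follows directly from Proposition~\ref{t1} and the degree assumption $\deg(g_k)\leq t-3$.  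So your proof actually establishes the lemma without ever using condition~(\ref{e,5}); only the reduced-basis hypothesis on $\{g_1,\dots,g_{s+r}\}$ is needed.  (A small streamlining: you can route the kernel argument through $M_t(y)$ rather than $M_{t-1}(y)$, using Proposition~\ref{t1}(iii) to pass from $\ker M_{t-2}(y)$ up to $\ker M_t(y)$, applying (i) there since $\deg(x_ig_k)\leq t-2\leq t-1$, and then (iii) again to come back down to $\ker M_{t-2}(y)$.)  The paper's route, on the other hand, yields the slightly finer information that the polynomial coefficients $h_k$ lie in $\R[x_1,\dots,x_{\cls(g_k)}]$ plus a constant, since the expansion in Remark~\ref{t15} only involves multiplicative variables; your proof does not give this, but the lemma as stated does not ask for it.
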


\begin{proof}
If $\deg(x^{\mu}  g_j)  \leq t-1$, by Proposition \ref{t1} (i), we
have $x^{\mu} g_j\in\ker M_{t-1}(y)$. According  to Remark
\ref{t15}, we have the expression (\ref{e,8}).  Otherwise,
 we set
$x^{\mu}=x^{\mu_{1}} x^{\mu_{2}}$  such that $\deg(x^{\mu_2}
g_j)=t-1$. Hence,  we have
\begin{align*}
x^{\mu}g_j &=x^{\mu_1}x^{\mu_2} g_{j} =
   x^{\mu_1}(\sum_{k=1}^{s}h_{k}g_k+\sum_{k=s+1}^{s+r}{\lambda_{k}}g_{k})\\
  &=\sum_{k=1}^{s}
  x^{\mu_1} h_k
  g_k+\sum_{k=s+1}^{s+r}\lambda_{k}x^{\mu_1}g_{k}.
\end{align*}
We can repeat the above reduction on $x^{\mu_1}g_{k}$ for $s+1 \leq
k \leq s+r$. Since $\deg(x^{\mu_1})<\deg(x^{\mu})$, after a finite
number of steps, we have the expected form (\ref{e,8}).
\end{proof}

\begin{theorem}\label{t8}
 Under Assumption \ref{assump},
a reduced basis $\{g_1,\ldots,g_{s+r}\}$ of  $\ker M_{t-2}(y)$ is a
weak Pommaret basis of the ideal $\langle \ker M_{t-2}(y) \rangle$.
\end{theorem}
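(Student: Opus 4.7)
The plan is to invoke Theorem \ref{t14}: showing $\{g_1,\ldots,g_{s+r}\}$ is a weak Pommaret basis of $J = \langle g_1,\ldots,g_{s+r}\rangle$ is equivalent to showing that every $f \in J$ admits a Pommaret-multiplicative representation
\[
f = \sum_{k=1}^{s+r} P_k g_k, \quad P_k \in \R[x_1,\ldots,x_{\cls(g_k)}], \quad \lt(P_k g_k) \ptdeg \lt(f).
\]
I would proceed by strong induction on $\lt(f)$ under the well-ordering $\tdeg$. The inductive step reduces to a key subclaim: for every nonzero $f \in J$, there exist $k$ and a monomial $x^\nu \in \R[x_1,\ldots,x_{\cls(g_k)}]$ with $x^\nu \lt(g_k) = \lt(f)$. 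Once the subclaim is in hand, setting $c = \lc(f)/\lc(g_k)$ makes $f - c\,x^\nu g_k \in J$ have strictly smaller leading term, and the inductive hypothesis delivers a Pommaret representation of this residual, to which one re-adds the Pommaret-multiplicative term $c\,x^\nu g_k$.

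To establish the subclaim I would first prove a monomial-multiple version: for every monomial $x^\mu$ and every basis element $g_k$, the product $x^\mu g_k$ admits a Pommaret representation $\sum_l Q_l g_l$ with each summand satisfying $\lt(Q_l g_l) \ptdeg \lt(x^\mu g_k)$. The argument follows the pattern of Lemma \ref{t10}: split $x^\mu = x^{\mu_1} x^{\mu_2}$ so that $\deg(x^{\mu_2} g_k) \leq t-1$. Proposition \ref{t1}(i) then places $x^{\mu_2} g_k$ in $\ker M_{t-1}(y)$, and Lemma \ref{t7} together with Remark \ref{t15} provides the Pommaret expansion
\[
x^{\mu_2} g_k = \sum_{l \leq s}\sum_{i \leq \cls(g_l)} c_{il}\, x_i g_l + \sum_{l=1}^{s+r} \lambda_l g_l,
\]
with every summand bounded in leading term by $\lt(x^{\mu_2} g_k)$. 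Multiplying by $x^{\mu_1}$ and recursively reducing the resulting pieces $x^{\mu_1}(x_i g_l)$ and $x^{\mu_1} g_l$ converts them to Pommaret form. Feeding the monomial-multiple result into any expansion $f = \sum_{\mu,k} c_{\mu k}\, x^\mu g_k$ produces a Pommaret sum equal to $f$; since at least one summand must realize $\lt(f)$ (otherwise top-degree cancellation would contradict the definition of the leading term), isolating its leading Pommaret-multiplicative monomial supplies the $k$ and $x^\nu$ demanded by the subclaim.

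The principal obstacle is the bookkeeping of leading terms through the iterated Pommaret reductions. Each application of Remark \ref{t15} trades one product for a sum of Pommaret pieces, at most one of which shares the leading term of the original --- the ``boundary'' piece --- which must then be further reduced by transferring the active generator from $g_k$ to some $g_{l_0}$. Showing that this process terminates requires a carefully chosen compound induction measure, such as lex on the pair (multiplier degree, leading term under $\tdeg$), and relies on the finiteness of the basis together with the well-ordering of $\tdeg$.
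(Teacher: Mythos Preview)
Your overall strategy matches the paper's: invoke Theorem \ref{t14}, reduce to monomial multiples $x^\mu g_j$, expand one step via Lemma \ref{t7} and Remark \ref{t15}, handle the strictly-smaller pieces by induction on $\lt$, and treat the unique boundary piece separately. The gap is precisely where you flag it, and your proposed fix does not work. The measure ``lex on $(\text{multiplier degree},\,\lt)$'' is stationary on the boundary piece in the critical case: when $\deg(g_j)=t-2$ and the boundary term comes from some $c_{i_0 l_0}\,x_{i_0}\,g_{l_0}$ with $\deg(g_{l_0})=t-2$, the new product $(x^\mu/x_{i_1})\,x_{i_0}\,g_{l_0}$ has the \emph{same} multiplier degree $|\mu|$ and the \emph{same} leading term as $x^\mu g_j$. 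Nothing in your scheme rules out cycling among degree-$(t{-}2)$ generators.

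The paper's termination argument tracks a different quantity. It peels off a single \emph{non-multiplicative} variable $x_{i_1}$ (so $i_1>\cls(g_j)$) rather than a chunk $x^{\mu_2}$, and then shows that the leading term of the \emph{active generator} strictly increases: $\lt(g_j)\tdeg\lt(g_{j_1})$. The verification is a class computation. From $\lt(x_{i_2}g_{j_1})=\lt(x_{i_1}g_j)$ with $i_2\leq\cls(g_{j_1})$ one obtains $i_2=\cls(x_{i_2}g_{j_1})=\cls(x_{i_1}g_j)=\cls(g_j)<i_1$, hence $x_{i_2}\tdeg x_{i_1}$; were $\lt(g_{j_1})\ptdeg\lt(g_j)$, multiplying would force $\lt(x_{i_2}g_{j_1})\tdeg\lt(x_{i_1}g_j)$, a contradiction. (If instead the boundary is $\lambda_{j_1}(x^\mu/x_{i_1})g_{j_1}$, then $\deg(g_{j_1})>\deg(g_j)$ directly.) Since only finitely many generator leading terms lie below $t_0=\lt(f)$, the chain $\lt(g_j)\tdeg\lt(g_{j_1})\tdeg\cdots$ terminates. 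Your instinct to invoke finiteness of the basis is right, but the quantity that moves is $\lt(g_{j_i})$, not $\lt(f)$ or $|\mu|$. As a side remark, your outer ``subclaim'' layer is redundant: once every $x^\mu g_k$ has a Pommaret representation with each summand's leading term bounded by $\lt(x^\mu g_k)$, every $f\in J$ does as well by linearity, and Theorem \ref{t14} applies immediately.
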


\begin{proof}
We  show  that  any polynomial  
 $f\in \langle \ker M_{t-2}(y) \rangle$ can be represented as
\begin{equation}\label{e,7}
f=\sum_{k=1}^{s}h_{k}g_k+\sum_{k=s+1}^{s+r}{\lambda_{k}}g_{k},
\end{equation}
where
 $\lambda_{k}\in \R$ and $h_{k}\in\R[x_1,\ldots,x_{\cls(g_{k})}]$.  Since $\lt(h_{k}g_k)$ and $\lt(g_k)$ are all
different for $1 \leq k \leq s+r$, if
 $f$ satisfies (\ref{e,7}), then we have
$\lt(h_{k}g_k)\preceq_{\text{\upshape tdeg}} \lt(f)$ for $1 \leq k
\leq s$ and $\lt(        
\lambda_k g_k) \preceq_{\text{\upshape tdeg}} \lt(f)$ for $s+1 \leq
k \leq s+r$. Therefore, according to Theorem \ref{t14}, the
polynomial set $\{g_1,\ldots,g_{s+r}\}$ is a weak Pommaret basis of
the ideal $\langle \ker M_{t-2}(y) \rangle$.

Since $\{g_1,\ldots,g_{s+r}\}$ is a reduced basis of $\ker
M_{t-2}(y)$,  every
 polynomial $f  \in    \langle \ker M_{t-2}(y)  \rangle $ can be
represented as
\[f=\sum_{j=1}^{s+r}h_{j}g_{j},\]
where $h_j\in\R[x], ~j=1,\ldots, s+r$.  Hence, we only
need to show that each polynomial $x^{\mu}g_j$ for $ \mu \in \N^{n}$ and
$ 1 \leq j \leq s+r$ can be
written as (\ref{e,7}).

Set $f=x^{\mu}g_j$.
If  $\deg(f) \leq t-1$, by Lemma \ref{t7}, we have the expected
expression (\ref{e,7}) directly. 
Otherwise, we prove
by the induction on its leading term $\lt(f)= t_0$, i.e., 
we assume that $f=x^{\mu}g_j$ has the expected expression
(\ref{e,7}) as long as $\lt(f) \tdeg t_0$ for $ \mu \in \N^{n}$ and
$1 \leq j \leq s+r$, we show it has the expected expression when
$\lt(f)= t_0$.

 If $x^{\mu}\in \R[x_1,\ldots,x_{\cls(g_{j})}]$, nothing
is to be proved. Otherwise, without loss of generality, let
$x_{i_1}$  be a non-multiplicative variable in $x^{\mu}$ with
respect to $g_j$. Since $\deg(g_j)\leq t-2$, $j=1,
\ldots, s+r$, by Proposition \ref{t1} 
(i),
we have $ x_{i_1} g_{j} \in \ker M_{t-1}(y)$. By Lemma \ref{t7} and
Remark \ref{t15},  we have
\begin{align}\label{eq1}
f&=x^{\mu}g_{j}=\big(x^{\mu}/x_{i_1}\big)~ x_{i_1} g_{j}\nonumber\\
  &=\big(x^{\mu}/x_{i_1}\big)\big(\sum_{k=1}^{s}\sum_{i=1}^{\cls(g_k)}
  c_{ik}x_ig_k+\sum_{k=1}^{s+r}\lambda_{k}g_{k}\big) \nonumber \\
 &=\sum_{k=1}^{s}\sum_{i=1}^{\cls(g_k)}c_{ik}\big(x^{\mu}/x_{i_1}\big) x_ig_k+\sum_{k=1}^{s+r}\lambda_{k} \big(x^{\mu}/x_{i_1}\big) g_{k}.
\end{align}
 According to Remark \ref{t15},  there are  two cases:
\begin{itemize}
\item [(i)] if all  $c_{ik}=0$,
there exits
only one $1\leq j_1\leq s+r$, such that $\lambda_{j_1}\neq0$ and
$\lt(\lambda_{j_1}\big(x^{\mu}/x_{i_1}\big)g_{j_1})=t_0$;

\item [(ii)]otherwise, there exists
$1\leq j_1\leq s$ and $1\leq i_2\leq \cls(g_{j_1})$  such that
$c_{i_{2}j_{1}}\neq0$ and
$\lt(c_{i_{2}j_{1}}\big(x^{\mu}/x_{i_1}\big)x_{i_2}g_{j_1})=t_0$.
\end{itemize}
In both cases, 
all  other terms in (\ref{eq1}) have leading terms of order less
than $t_0$, which can be expressed as (\ref{e,7}) by induction.
Moreover,   above two cases do not exist simultaneously.
  Therefore, we only need
 to check whether the polynomial
$\lambda_{j_1}\big(x^{\mu}/x_{i_1}\big)g_{j_1}$ in case (i) or
$c_{i_{2}j_{1}}\big(x^{\mu}/x_{i_1}\big) x_{i_2}g_{j_1}$ in case
(ii) has the representation (\ref{e,7}).
%

In case (i),  if $x^{\mu}/x_{i_1} \in \R[x_1,\ldots,x_{\cls(g_{j_1})}]$
then we  obtain  the representation (\ref{e,7}). Otherwise, we
repeat the reduction to the polynomial $\big(x^{\mu}/x_{i_1}\big)
g_{j_1}$.  Since
$\lt(\lambda_{j_1}\big(x^{\mu}/x_{i_1}\big)g_{j_1})=\lt(x^{\mu}g_{j})=t_0$,
we have $\deg(g_j)<\deg(g_{j_1})$, i.e.,
\[\lt(g_{j})\tdeg\lt(g_{j_1}).\]

In case (ii),   if $x^{\mu}/x_{i_1} \in
\R[x_1,\ldots,x_{\cls(g_{j_1})}]$, since $x_{i_2}$ is a multiplicative
variable of $\lt(g_{j_1})$,  then
$\big(x^{\mu}/x_{i_1}\big)x_{i_2} \in \R[x_1,\ldots,x_{\cls(g_{j_1})}]$.
Hence,  we  obtain  the representation (\ref{e,7}). Otherwise, since
$x_{i_1}$ is a non-multiplicative variable of $\lt(g_{j})$ and
$x_{i_2}$ is a multiplicative variable of $\lt(g_{j_1})$, we have
\begin{align*}
\cls(g_{j})< \cls(x_{i_1}), \quad \cls(x_{i_2}) \leq
~\cls(g_{j_1}).
\end{align*}
Because $\lt(c_{i_{2}j_{1}}\big(x^{\mu}/x_{i_1}\big)
x_{i_2}g_{j_1})=
t_0$, we have  $\lt(x_{i_2}g_{j_1})=\lt(x_{i_1}
g_{j})$ and
\begin{align}\label{eq2}
&\cls(x_{i_2})=\cls(x_{i_2}g_{j_1})=\cls(x_{i_1}g_{j})<\cls(x_{i_1}).
\end{align}
This implies that $x_{i_2}\tdeg x_{i_1}$. If $\lt(g_{j_1})
\preceq_{\text{\upshape tdeg}}
\lt(g_{j})$,
we have $\lt(x_{i_2}g_{j_1})$ $\tdeg$ $\lt(x_{i_1} g_{j})$ which
leads to
a contradiction. Therefore, we can deduce that
\[\lt(g_{j})\tdeg \lt(g_{j_1}).\]

In both cases, if the reduction does not stop, we will obtain a
sequence of polynomials satisfying
\[\lt(g_j) \tdeg \lt(g_{j_1}) \tdeg \cdots \tdeg  \lt(g_{j_{i}}) \tdeg
\lt(g_{j_{i+1}}) \tdeg \cdots \tdeg t_0.\]
 Since the number of polynomials with strict increase leading terms
  bounded by $\lt(f)=t_0$ is finite, the above procedure
 will stop in a finite number of steps  and we obtain the expected form (\ref{e,7}) for
 $f$.
\end{proof}

\begin{theorem}\label{t12}
In a $\delta$-regular coordinate system for $\sqrt[\R]{I}$, after a
finite number of  steps, Algorithm \ref{e,6} will terminate and
return an integer  $t\geq 2d$ which satisfies the condition
(\ref{e,5}) for an element $y\in \Kk_t^{gen}$.
\end{theorem}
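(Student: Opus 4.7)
The plan is to show that once $t$ exceeds a threshold determined by $\radI$, the certificate (\ref{e,5}) is automatically satisfied for every $y\in\Kk_t^{gen}$. Invoking the $\delta$-regularity hypothesis together with Proposition \ref{stronginvolutive}, fix a finite strong Pommaret basis $\mathcal{H}=\{p_1,\dots,p_k\}$ of $\radI$ with respect to $\tdeg$, and set $D=\max_i\deg(p_i)$. By Proposition \ref{t2}(iii), pick $t_1$ so that $\mathcal{H}\subseteq \ker M_t(y)$ for every $t\ge t_1$ and every $y\in\Kk_t$. I will take any $t\ge\max(2d,\,t_1+1,\,D+2)$ and verify the certificate.

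The first substantive step upgrades the containment $\mathcal{H}\subseteq\ker M_{t-1}(y)$ into the equality $\ker M_s(y)=\radI\cap\R[x]_s$ for $s\in\{t-2,t-1\}$. Proposition \ref{t2}(i) gives the inclusion $\subseteq$. For the reverse, any $f\in\radI$ with $\deg f\le t-1$ admits by Theorem \ref{t14} an involutive representation $f=\sum_iP_ip_i$ with $\deg(P_ip_i)\le \deg f\le t-1$; Proposition \ref{t1}(i) then promotes each $P_ip_i$ into $\ker M_t(y)$, and Proposition \ref{t1}(iii) deposits $f$ into $\ker M_{t-1}(y)$. Since $\tdeg$ is graded, the leading monomials occurring in a reduced basis of $\radI\cap\R[x]_s$ are exactly $\lt(\radI)\cap\R[x]_s$. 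Letting $B_d$ denote the set of monomials in $\lt(\radI)$ of degree exactly $d$, we therefore have $\crk M_{t-1}(y)-\crk M_{t-2}(y)=|B_{t-1}|$ and $\alpha_j=\#\{m\in B_{t-2}:\cls(m)=j\}$, so the certificate (\ref{e,5}) reduces to the purely combinatorial identity $|B_{t-1}|=\sum_{j=1}^n j\alpha_j$.

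The second and decisive step is a bijection reminiscent of Cartan's test. Since $\mathcal{H}$ is strong, Corollary \ref{t16} gives the disjoint Stanley decomposition
\[
\lt(\radI)\;=\;\bigsqcup_{p\in\mathcal{H}} \lt(p)\cdot\R[x_1,\ldots,x_{\cls(p)}].
\]
Using $t-1>D$, every $m\in B_{t-1}$ is uniquely written $m=x^\mu\lt(p)$ with $\mu\ne 0$ and $\mathrm{supp}(\mu)\subseteq\{1,\ldots,\cls(p)\}$; setting $l=\cls(m)$ one checks $l\le\cls(p)$, $m/x_l\in B_{t-2}$, and $\cls(m/x_l)\ge l$, so $x_l$ is Pommaret-multiplicative for $m/x_l$. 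The assignment $m\mapsto(m/x_l,\,l)$ is a bijection from $B_{t-1}$ onto $\{(m',l):m'\in B_{t-2},\;1\le l\le\cls(m')\}$, its inverse being $(m',l)\mapsto x_lm'$ (which lies in $B_{t-1}$ because $\lt(\radI)$ is closed under multiplication by variables). Counting both sides,
\[
|B_{t-1}|\;=\;\sum_{m'\in B_{t-2}}\cls(m')\;=\;\sum_{j=1}^n j\alpha_j,
\]
which is (\ref{e,5}); by Remark \ref{t11} the value is independent of the chosen reduced basis, so Algorithm \ref{e,6} halts at this $t$.

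The main obstacle is the combinatorial bijection: one must carefully distinguish the intrinsic class of a monomial from the class of the Stanley cone generator containing it, and must guarantee no element of $\mathcal{H}$ has degree $t-1$, which is the reason for imposing $t\ge D+2$. A secondary but essential technical point is the identification $\ker M_s(y)=\radI\cap\R[x]_s$, which really uses both the existence of the finite Pommaret basis furnished by $\delta$-regularity and the degree control in the involutive representation of Theorem \ref{t14}.
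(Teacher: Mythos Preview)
Your proof is correct and follows essentially the same route as the paper: both arguments use the (strong) Pommaret basis of $\radI$ furnished by $\delta$-regularity together with Proposition~\ref{t2} and Proposition~\ref{t1} to identify $\ker M_s(y)$ with $(\radI)_s$ for $s\in\{t-2,t-1\}$ at large $t$, and then verify~(\ref{e,5}) by the Stanley decomposition coming from Corollary~\ref{t16}. The only cosmetic difference is that you phrase the final count as a bijection on leading monomials $B_{t-1}\leftrightarrow\{(m',l):m'\in B_{t-2},\,l\le\cls(m')\}$, whereas the paper constructs explicit polynomial bases $T$ and $T_1$; these are the same counting argument at the monomial versus polynomial level.
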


\begin{proof}
In a $\delta$-regular coordinate system, we have a finite  Pommaret
basis $\mathcal{H}=\{h_1,\ldots,h_s\}$ for the real radical ideal
$I(V_{\R}(I))$. According to Proposition \ref{t2} (iii), we can
conclude that there exists an integer $t_1 $ such that
 the Pommaret basis $\{h_1,\ldots,h_s\}$ is contained in
$\ker M_{t}(y)$ for all $y \in \Kk_t$ and $t\geq t_1$.

Since $\mathcal{H}$ is a  Pommaret basis of $I(V_{\R}(I))$,
according to Corollary \ref{t16}, for $ t\geq t_1+2$, we
have the following decomposition: 
\begin{equation}\label{e,12}
I(V_{\R}(I))_{t-2}=\bigoplus_{h_k \in
\mathcal{H}}\R[x_1,\ldots,x_{\cls(h_k)}]_{t-2-\deg(h_k)} \cdot h_k.
\end{equation}
Let
\begin{equation}\label{Teq}
T=\{x^u h_k\mid x^u\in \R[x_1,\ldots,x_{\cls(h_k)}], ~\deg(x^u)\leq
t-2-\deg(h_k), 1\leq k\leq s\}.
\end{equation}
According to  Proposition \ref{t1} (i),
$T\subseteq\ker M_{t-2}(y)$.
 Therefore, by (\ref{e,12}) and
(\ref{Teq}), we have
\[I(V_{\R}(I))_{t-2} \subseteq \ker
M_{t-2}(y).\] On the other hand,  $y$ is a generic element, by
Proposition \ref{t2} (i), we have
\[\ker M_{t-2}(y)\subseteq
I(V_{\R}(I))_{t-2}.\]
 Hence, we have
 $\ker
M_{t-2}(y)=I(V_{\R}(I))_{t-2}$  and the  decomposition:
\begin{equation}\label{Teq2} \ker M_{t-2}(y)=\bigoplus_{h_k \in
\mathcal{H}}\R[x_1,\ldots,x_{\cls(h_k)}]_{t-2-\deg(h_k)} \cdot h_k.
\end{equation}

 Since $\mathcal{H}$ is a  Pommaret basis of $I(V_{\R}(I))$, according to Definition \ref{f,2},
each polynomial in $T$ has a
different leading term.  Therefore $T$ is actually  a reduced basis
of  $\ker M_{t-2}(y)$. By Remark \ref{t11}, it suffices to show that
the condition (\ref{e,5}) holds for the polynomials in $T$.

Similar to the decomposition (\ref{Teq2}), we can show that  there
exists a  direct sum decomposition of $\ker M_{t-1}(y)$:
\begin{equation}\label{Teq3}
\ker M_{t-1}(y)=\bigoplus_{h_{k} \in
\mathcal{H}}\R[x_1,\ldots,x_{\cls(h_k)}]_{t-1-\deg(h_{k})} \cdot
h_{k}.
\end{equation}
 For a polynomial $f \in \ker M_{t-1}(y)$  with  $
\deg(f)=t-1$, according to (\ref{Teq3}), 
we have the
following equalities:
\begin{align*}
f&=\sum_{k=1}^{s}\sum_{0\leq|\mu|\leq t-1-\deg(h_k)}c_{\mu k}x^{\mu}h_k   ~~(\text{note that}~ x^{\mu} \in \R[x_1,\ldots,x_{\cls(h_k)}]) \\
 &=\sum_{k=1}^{s}\sum_{|\mu|=t-1-\deg(h_k)}c_{\mu k}x^{\mu}h_k+\sum_{k=1}^{s}\sum_{0\leq|\mu|\leq t-2-\deg(h_k)}c_{\mu k}x^{\mu}h_k\\
 &=\sum_{k=1}^{s}\sum_{|\mu|=t-1-\deg(h_k)}c_{\mu k}
 x_{\cls(x^{\mu})}\big(x^{\mu}/x_{\cls(x^{\mu})}\big)
  h_k+\sum_{k=1}^{s}\sum_{0\leq|\mu|\leq t-2-\deg(h_k)}c_{\mu k}x^{\mu}h_k.
\end{align*}
Since $x_{\cls(x^{\mu})}$ is always a multiplicative variable for
the polynomial $\big(x^{\mu}/x_{\cls(x^{\mu})}\big)h_k\in T$, we
know that each polynomial in $\ker M_{t-1}(y)$ can be represented by
the polynomials in  $T$ and $T_1$, where
\[
T_1=\{x_ig\mid 1\leq i\leq\cls(g),\ g \in T, 
\deg(g)=t-2\}.
\]
The polynomials in $T_1$ and $T$  have different leading terms,
hence $T \cup T_1$ is   a linearly independent basis of 
 $\ker
M_{t-1}(y)$. Moreover, $T$ is  a reduced
 basis of $\ker M_{t-2}(y)$,
   and $T_1$ consists of all linearly independent polynomials with degree $t-1$ in $\ker M_{t-1}(y)$. 
We can deduce that the number of polynomials in $T_1$ is equal to
$\crk \, M_{t-1}(y)-\crk \, M_{t-2}(y)$.  On the other hand,
 let  $\alpha_{j}$ denote the
number of polynomials of class $j$ and degree $t-2$ in $T$. Since
the set $T_1$  is constructed by multiplying   polynomials in $T$ of
degree $t-2$  by their  multiplicative variables only, the total
number of polynomials in $T_1$ is equal to $\sum_{j=1}^{n}
j\alpha_{j}.$
 Therefore, the condition (\ref{e,5}) is satisfied.
\end{proof}

\subsection{An Extension to  $I(V_{\R}(I) \cap \Aa)$} Consider the
semialgebraic set $\Aa:=\{x\in\R^n\mid f_1(x)\geq0, \ldots,
f_s(x)\geq0\}$ and  the $\Aa$-radical ideal $I(V_{\R}(I)\cap \Aa)$.
We  restrict to a subset $\Kk_{t,\Aa}\subseteq\Kk_{t}$ defined as
\begin{align*}
    \Kk_{t,\Aa}:= \Kk_t \cap \left\{y\in\R^{\N^n_{2t}}\mid
    M_{t-d_{f^{\nu}}}(f^{\nu} y)\succeq0\,\,\text{for
    all}\,\nu\in\{0,1\}^s \right\},
\end{align*}
where $d_{f^{\nu}}=\lceil\deg(f^{\nu})/2\rceil$ and $t \geq d=
\max_{1\leq j\leq m, {\nu}\in\{0,1\}^s} \{d_j,~
     d_{f^{\nu}}\}$.

For $t$ large enough, Lemma \ref{t5} and Theorem \ref{t6} show that
the information about $\sqrt[\Aa]{I}$ is contained in the projection
of a generic element $y\in\Kk_{t,\Aa}$. Thus, propositions and
theorems discussed above are true for generic elements $y$ in
$\Kk_{t,\Aa}$.

 The following theorem can be seen as a variant
of Theorem \ref{maintheorem} for the semialgebraic set $\Aa$. The
proof uses exactly the same reason as in Theorem \ref{t8} and
Theorem \ref{t12}   after replacing $\Kk_t$ and
 $\sqrt[\R]{I}$ by $\Kk_{t,\Aa}$ and $\sqrt[\Aa]{I}$ respectively.

\begin{theorem}
Suppose the condition (\ref{e,5})  holds for a generic element
$y\in\Kk_{t,\Aa}$, and $t \geq 2d$. Then a reduced basis of the null
space of $M_{t-2}(y)$
 is a weak Pommaret basis of
 $\langle \ker
M_{t-2}(y) \rangle$ under the monomial ordering $\tdeg$ and
\begin{equation*}
I \subseteq \langle \ker M_{t-2}(y) \rangle \subseteq I(V_{\R}(I)
\cap \Aa).
\end{equation*}
\end{theorem}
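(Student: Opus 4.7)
The plan is to show that the arguments of Theorem \ref{t8} (weak Pommaret basis property) and the $\Aa$-radical version of Theorem \ref{t12} (containment) both go through verbatim once we replace $\Kk_t$ by $\Kk_{t,\Aa}$ and $\sqrt[\R]{I}$ by $\sqrt[\Aa]{I}=I(V_{\R}(I)\cap\Aa)$. First I would observe that since $\Kk_{t,\Aa}\subseteq\Kk_t$, the element $y$ in the hypothesis still satisfies $M_t(y)\succeq 0$ and $M_{t-d_j}(h_j y)=0$, so Proposition \ref{t1} applies without change. In particular, the generators $h_j$ of $I$ belong to $\ker M_{t-2}(y)$ (as $t\geq 2d$), giving the inclusion $I\subseteq\langle\ker M_{t-2}(y)\rangle$.

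Next I would establish the weak Pommaret basis property. The statement and proof of Lemma \ref{t7} use only Proposition \ref{t1}(i), (iii) and the cardinality condition (\ref{e,5}), none of which involves the semialgebraic set $\Aa$; hence the description of a reduced basis of $\ker M_{t-1}(y)$ in terms of the reduced basis $\{g_1,\ldots,g_{s+r}\}$ of $\ker M_{t-2}(y)$ carries over. Lemma \ref{t10} and Theorem \ref{t8} then proceed solely by the symbolic reduction scheme using non-multiplicative variables (the ``case (i)/case (ii)'' induction on leading terms with respect to $\tdeg$) together with Theorem \ref{t14}; again, $\Aa$ plays no role. Thus $\{g_1,\ldots,g_{s+r}\}$ is a weak Pommaret basis of $\langle\ker M_{t-2}(y)\rangle$.

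It remains to show the containment $\langle\ker M_{t-2}(y)\rangle\subseteq I(V_{\R}(I)\cap\Aa)$. Here I would mimic the argument appearing in the proof of Theorem \ref{t6}: for any point $v\in V_{\Aa}(I)$, the truncated moment sequence $[v]_{2t}$ lies in $\Kk_{t,\Aa}$, so by convexity $z=(y+[v]_{2t})/2\in\Kk_{t,\Aa}$; since $y$ maximizes the rank of $M_t$ over $\Kk_{t,\Aa}$, we get $\ker M_t(y)\subseteq\ker M_t([v]_{2t})$, which forces every $p\in\ker M_t(y)$ to satisfy $p(v)=0$. Therefore $\ker M_t(y)\subseteq I(V_{\Aa}(I))$, and by Proposition \ref{t1}(iii) also $\ker M_{t-2}(y)\subseteq I(V_{\Aa}(I))$. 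Since $I(V_{\Aa}(I))$ is an ideal, the generated ideal $\langle\ker M_{t-2}(y)\rangle$ is contained in $I(V_{\R}(I)\cap\Aa)$ as claimed.

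The only place that requires care, and arguably the main obstacle, is making sure that the ``$\Aa$-analog'' of Proposition \ref{t2} used implicitly in Theorem \ref{t8} (namely that $y$ being generic in $\Kk_{t,\Aa}$ forces $\ker M_t(y)\subseteq\ker M_t(z)$ for every $z\in\Kk_{t,\Aa}$) indeed holds. This follows from the standard convexity argument above applied to arbitrary $z\in\Kk_{t,\Aa}$, not only to evaluation functionals $[v]_{2t}$; one writes $\ker M_t((y+z)/2)=\ker M_t(y)\cap\ker M_t(z)$ and invokes rank-maximality. Once this is in place, every ingredient of Theorems \ref{t8} and \ref{t12} transfers verbatim, and the conclusion of the theorem follows.
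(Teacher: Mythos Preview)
Your proposal is correct and mirrors the paper's own argument, which simply states that the proofs of Theorems~\ref{t8} and~\ref{t12} carry over verbatim after replacing $\Kk_t$ by $\Kk_{t,\Aa}$ and $\sqrt[\R]{I}$ by $\sqrt[\Aa]{I}$. One minor correction: the proof of Theorem~\ref{t8} does not invoke Proposition~\ref{t2} at all (it uses only Proposition~\ref{t1} and condition~(\ref{e,5})), so the concern in your final paragraph is unnecessary for the Pommaret-basis claim; the $\Aa$-analog of Proposition~\ref{t2}(i) is needed only for the upper inclusion $\langle\ker M_{t-2}(y)\rangle\subseteq I(V_{\Aa}(I))$, which you already handle via the convexity argument in the preceding paragraph.
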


\begin{remark}
For computing a Pommaret basis of $\langle \ker M_{t-2}(y) \rangle$,
we add the defining polynomials $\{f_1,\ldots,f_s\}$ of the
semialgebraic set $\Aa$ to the input of the above algorithm and
additional constraints $M_{t-d_{f^{\nu}}}(f^{\nu}
y)\succeq0\,\,\text{for
    all}
    ~\nu\in\{0,1\}^s$ to the semidefinite program (\ref{e,11}).
\end{remark}

\section{Numerical examples}\label{examples}

We present here the results obtained by  applying Algorithm
\ref{e,6} to some examples in
\cite{Rostalski09,SRWZ09,Seiler02,Stetter04} and others.

\begin{example}
Consider the 2-dimensional ideal $I=\langle h_1, h_2, h_3\rangle$
taken from \cite[p.397, Eq. (9.60)]{Stetter04} where
\begin{align*}
    h_1 &= x_1^2+x_1x_2-x_1x_3-x_1-x_2+x_3,\\
    h_2 &= x_1x_2+x_2^2-x_2x_3-x_1-x_2+x_3,\\
    h_3 &= x_1x_3+x_2x_3-x_3^2-x_1-x_2+x_3.
\end{align*}
The rank and  corank sequences for truncated moment matrices
$M_{t-\ell}(y)$ are shown in Table 1 and 2. We set $\tau=10^{-5}$
and $x_1\tdeg x_2\tdeg x_3$. For  t=4,  we have
\[\sum_{j=1}^3 j\alpha_{j}=6, ~{\text{and}}~ 
\crk \, M_{4-1}-\crk \, M_{4-2}=6.\]
Hence, the condition
(\ref{e,5}) is satisfied. The Pommaret basis  computed by
Algorithm \ref{e,6} for $t=4$ is
\begin{align*}
\{x_1+x_2-x_3-x_1^2-x_1x_2+x_1 x_3, x_1+x_2-x_3-x_1 x_2-x_2^2+x_2
x_3, \\
 3 x_1+3 x_2 -3 x_3 -x_1^2-2 x_1 x_2-x_2^2+x_3^2\}.
 \end{align*}
 From Table 3, we note that the condition (\ref{e,5}) is also
satisfied for  $t=5,6,7$.  For this example, we can show that
$\langle\ker M_{4-2}(y)\rangle=\sqrt[\R]{I}$, and a reduced   basis of $\ker
M_{4-2}(y)$  is a Pommaret basis of $\sqrt[\R]{I}$. Hence,
 the condition (\ref{e,5}) can
be satisfied by arbitrary  $t \geq 4$.
\begin{table*}[!htb]
\begin{minipage}{0.48\textwidth}
\begin{center}
\caption{\small The rank of $M_{t-\ell}(y)$ }
\begin{tabular}{cccccccc}
  \hline\noalign{\smallskip}
Order & $\ell=0$ & $\ell=1$ &$ \ell=2$ \\
\noalign{\smallskip}\hline\noalign{\smallskip}
   t=4  & 16& 11& 7 \\
   t=5  & 22& 16& 11\\
   t=6  & 29& 22& 16\\
   t=7  & 37& 29& 22\\
   \noalign{\smallskip}\hline
\end{tabular}
\end{center}
\end{minipage}
\begin{minipage}{0.5\textwidth}
\begin{center}
\caption{\small The corank of $M_{t-\ell}(y)$ }
\begin{tabular}{cccccccc}
  \hline\noalign{\smallskip}
Order & $\ell=0$ & $\ell=1$ &$ \ell=2$ \\
\noalign{\smallskip}\hline\noalign{\smallskip}
   t=4  & 19&  9& 3 \\
   t=5  & 34& 19& 9 \\
   t=6  & 55& 34& 19\\
   t=7  & 83& 55& 34\\
   \noalign{\smallskip}\hline
\end{tabular}
\end{center}
\end{minipage}
\end{table*}
\\
\begin{table*}[!htb]
\center \caption{\small The $\alpha_j$ of a reduced basis of $\ker
M_{t-2}(y)$}
\begin{tabular}{ccccccccc}
   \hline\noalign{\smallskip}
Order & $\alpha_1$ &$ \alpha_2$&$ \alpha_3$ &$\sum_{j=1}^3 j\alpha_{j}$ \\
\noalign{\smallskip}\hline\noalign{\smallskip}
   t=4  & 1 & 1 & 1 & 1$\times$1+1$\times$2+1$\times$3=6&\\
   t=5  & 3 & 2 & 1 & 3$\times$1+2$\times$2+1$\times$3=10&\\
   t=6  & 6 & 3 & 1 & 6$\times$1+3$\times$2+1$\times$3=15&\\
   t=7  & 10& 4 & 1 & 10$\times$1+4$\times$2+1$\times$3=21&\\
   \noalign{\smallskip}\hline
\end{tabular}
\end{table*}

\end{example}

\begin{example}
Consider the polynomial system $P=\{h_1, h_2\}$ in \cite[p.20, Ex
1.4.6]{SRWZ09}, where
\begin{align*}
    h_1 &= x_1^2-x_2,\\
    h_2 &= x_1x_2-x_3.
\end{align*}
For the term order   $x_3\tdeg x_1\tdeg x_2$, we have $\cls(x_1)=2$,
$ \cls(x_2)=3$, $ \cls(x_3)=1$.  Let  $\tau=10^{-8}$,  the rank and
corank sequences for truncated moment matrices $M_{t-\ell}(y)$ are
shown in  Table 4 and 5.
\begin{table*}[!htb]
\begin{minipage}{0.47\textwidth}
\begin{center}
\caption{\small The rank of $M_{t-\ell}(y)$ }
\begin{tabular}{cccccccc}
  \hline\noalign{\smallskip}
Order & $\ell=0$ & $\ell=1$ &$ \ell=2$ \\
\noalign{\smallskip}\hline\noalign{\smallskip}
   t=3  & 12&  7& 4 \\
   t=4  & 16& 10& 7 \\
   t=5  & 20& 13& 10 \\
   \noalign{\smallskip}\hline
\end{tabular}
\end{center}
\end{minipage}
\begin{minipage}{0.5\textwidth}
\begin{center}
\caption{\small The corank of $M_{t-\ell}(y)$ }
\begin{tabular}{cccccccc}
  \hline\noalign{\smallskip}
Order & $\ell=0$ & $\ell=1$ &$ \ell=2$ \\
\noalign{\smallskip}\hline\noalign{\smallskip}
   t=3  &  8&  3& 0 \\
   t=4  & 19& 10& 3 \\
   t=5  & 36& 22& 10 \\
   \noalign{\smallskip}\hline
\end{tabular}
\end{center}
\end{minipage}
\end{table*}

\begin{table*}[!htb]
\center \caption{\small  The  $\alpha_j$ of a reduced basis of $\ker
M_{t-2}(y)$}
\begin{tabular}{ccccccccc}
   \hline\noalign{\smallskip}
Order & $\alpha_1$ &$ \alpha_2$& $\alpha_3$ &$\sum_{j=1}^3 j\alpha_{j}$ \\
\noalign{\smallskip}\hline\noalign{\smallskip}
   t=3  & 0 & 0 & 0 & 0$\times$2+0$\times$3+0$\times$1=0\\
   t=4  & 2 & 1 & 0 & 2$\times$2+1$\times$3+0$\times$1=7\\
   t=5  & 3 & 1 & 3 & 3$\times$2+1$\times$3+3$\times$1=12\\
   \noalign{\smallskip}\hline
\end{tabular}
\end{table*}

For t=4, we have
\[\sum_{j=1}^3 j\alpha_{j}=7, ~{\text{and}}~
\crk \, M_{4-1}-\crk \, M_{4-2}=7.\] Hence,   the condition
(\ref{e,5}) is satisfied. The Pommaret basis  computed by
Algorithm \ref{e,6} for $t=4$ is
\[
\{x_1^2-x_2, x_1 x_2 -x_3, x_2^2-x_1 x_3\}.
\]

\end{example}

\begin{example}
Consider the ideal $I=\langle h_1, h_2\rangle$ in \cite[p.123, Ex
7.41]{Rostalski09} with
\begin{align*}
    h_1&=x_1^2+x_2^2+x_3^2-2,\\
    h_2&=x_1^2+x_2^2-x_3.
\end{align*}

 The real variety
$V_{\R}(I)$ for this ideal is strictly contained in $V_{\C}(I)$. We
set  $\tau=10^{-8}$ and $x_1\tdeg x_2\tdeg x_3$. The rank and corank
sequences for truncated moment matrices $M_{t-\ell}(y)$ are  shown
in Table 7 and 8.

\begin{table*}[!htb]
\begin{minipage}{0.47\textwidth}
\begin{center}
\caption{\small The rank of $M_{t-\ell}(y)$ }
\begin{tabular}{cccccccc}
  \hline\noalign{\smallskip}
Order & $\ell=0$ & $\ell=1$ &$ \ell=2$ \\
\noalign{\smallskip}\hline\noalign{\smallskip}
   t=3  &  7&  5& 3 \\
   t=4  &  9&  7& 5\\
   t=5  & 11&  9& 7\\
   t=6  & 13& 11& 9\\
   \noalign{\smallskip}\hline
\end{tabular}
\end{center}
\end{minipage}
\begin{minipage}{0.50\textwidth}
\begin{center}
\caption{\small The corank of $M_{t-\ell}(y)$ }
\begin{tabular}{cccccccc}
  \hline\noalign{\smallskip}
Order & $\ell=0$ & $\ell=1$ &$ \ell=2$ \\
\noalign{\smallskip}\hline\noalign{\smallskip}
   t=3  & 13&  5& 1 \\
   t=4  & 26& 13& 5 \\
   t=5  & 45& 26& 13\\
   t=6  & 71& 45& 26\\
   \noalign{\smallskip}\hline
\end{tabular}
\end{center}
\end{minipage}
\end{table*}

\begin{table*}[!htb]
\center \caption{\small The  $\alpha_j$ of a reduced basis of $\ker
M_{t-2}(y)$}
\begin{tabular}{ccccccccc}
   \hline\noalign{\smallskip}
Order & $\alpha_1$ &$ \alpha_2$& $\alpha_3$ & $\sum_{j=1}^3 j\alpha_{j}$ \\
\noalign{\smallskip}\hline\noalign{\smallskip}
   t=3  & 0 & 0 & 1 & 0$\times$1+0$\times$2+1$\times$3=3\\
   t=4  & 1 & 2 & 1 & 1$\times$1+2$\times$2+1$\times$3=8\\
   t=5  & 4 & 3 & 1 & 4$\times$1+3$\times$2+1$\times$3=13\\
   t=6  & 8 & 4 & 1 & 8$\times$1+4$\times$2+1$\times$3=19\\
   \noalign{\smallskip}\hline
\end{tabular}

\end{table*}

For t=4, we have
\[\sum_{j=1}^3 j\alpha_{j}=8, ~{\text{and}}~
\crk \, M_{4-1}-\crk \, M_{4-2}=8.\] Hence,   the condition
(\ref{e,5}) is satisfied. The Pommaret basis  computed by
Algorithm \ref{e,6} for $t=4$ is
\[
\{-1+x_3, -1+x_1^2+x_2^2\}. 
\]
 For this example, we can show that
$\langle\ker M_{4-2}(y)\rangle=\sqrt[\R]{I}$, and a reduced   basis of $\ker
M_{4-2}(y)$  is a Pommaret basis of $\sqrt[\R]{I}$. Hence,
 the condition (\ref{e,5}) can
be satisfied by arbitrary  $t \geq 4$.
\end{example}

\begin{example}
Consider the ideal $I=\langle h_1,h_2,h_3\rangle$ in \cite[p.61, Ex
2.4.12]{Seiler02}, where
\begin{align*}
    h_1 &= x_3^2+x_2x_3-x_1^2,\\
    h_2 &= x_1x_3+x_1x_2-x_3,\\
    h_3 &= x_2x_3+x_2^2+x_1^2-x_1.
\end{align*}

Let  $\tau=10^{-7}$ and the term order be   $x_1\tdeg x_2\tdeg x_3$.
The rank and corank sequences for the truncated moment matrices
$M_{t-\ell}(y)$ are shown in  Table 10 and 11. 
\begin{table*}[!htb]
\begin{minipage}{0.48\textwidth}
\begin{center}
\caption{\small The rank of $M_{t-\ell}(y)$ }
\begin{tabular}{cccccccc}
  \hline\noalign{\smallskip}
Order & $\ell=0$ & $\ell=1$ &$ \ell=2$ \\
\noalign{\smallskip}\hline\noalign{\smallskip}
   t=4  & 13& 10& 7 \\
   t=5  & 16& 13& 10\\
   t=6  & 19& 16& 13\\
   t=7  & 22& 19& 16\\
   \noalign{\smallskip}\hline
\end{tabular}
\end{center}
\end{minipage}
\begin{minipage}{0.5\textwidth}
\begin{center}
\caption{\small The corank of $M_{t-\ell}(y)$ }
\begin{tabular}{cccccccc}
  \hline\noalign{\smallskip}
Order & $\ell=0$ & $\ell=1$ &$ \ell=2$ \\
\noalign{\smallskip}\hline\noalign{\smallskip}
   t=4  & 22& 10& 3 \\
   t=5  & 40& 22& 10 \\
   t=6  & 65& 40& 22\\
   t=7  & 98& 65& 40\\
   \noalign{\smallskip}\hline
\end{tabular}
\end{center}
\end{minipage}
\end{table*}\\

\begin{table*}[!htb]
\center \caption{\small The $\alpha_j$ of a reduced basis of $\ker
M_{t-2}(y)$}
\begin{tabular}{ccccccccc}
   \hline\noalign{\smallskip}
Order & $\alpha_1$ &$ \alpha_2$& $\alpha_3$ &$\sum_{j=1}^3 j\alpha_{j}$ \\
\noalign{\smallskip}\hline\noalign{\smallskip}
   t=4  & 1 & 1 & 1 & 1$\times$1+1$\times$2+1$\times$3=6\\
   t=5  & 4 & 2 & 1 & 4$\times$1+2$\times$2+1$\times$3=11\\
   t=6  & 8 & 3 & 1 & 8$\times$1+3$\times$2+1$\times$3=17\\
   t=7  & 13& 4 & 1 & 13$\times$1+4$\times$2+1$\times$3=24\\
   \noalign{\smallskip}\hline
\end{tabular}
\end{table*}
 The condition (\ref{e,5}) can not be
 satisfied for $t$ from $4$ to $7$.  Actually, Seiler showed in
 \cite{Seiler02} that
the coordinates $(x_1, x_2, x_3)$ are not  $\delta$-regular for the
ideal $I$. However, if we perform the linear transformation
suggested  in \cite{Seiler02}, $\tilde{x}_1=x_3$,
$\tilde{x}_2=x_2+x_3$, $\tilde{x}_3=x_1$,  after an auto-reduction,
we obtain the polynomial system
$\tilde{P}=\{\tilde{x}_1\tilde{x}_2-\tilde{x}_3^2,
\tilde{x}_2\tilde{x}_3-\tilde{x}_1, \tilde{x}_2^2-\tilde{x}_3\}$. We
choose an ordering $\tilde{x}_1\tdeg \tilde{x}_2\tdeg \tilde{x}_3$
and $\tau=10^{-8}$. The rank and corank sequences for the truncated
moment matrices $M_{t-\ell}(y)$ are shown in Table 13 and 14.

\begin{table*}[!htb]
\begin{minipage}{0.48\textwidth}
\begin{center}
\caption{\small The rank of $M_{t-\ell}(y)$ }
\begin{tabular}{cccccccc}
  \hline\noalign{\smallskip}
Order & $\ell=0$ & $\ell=1$ &$ \ell=2$ \\
\noalign{\smallskip}\hline\noalign{\smallskip}
   t=4  & 13& 10& 7 \\
   t=5  & 16& 13& 10 \\
   t=6  & 19& 16& 13\\
   \noalign{\smallskip}\hline
\end{tabular}
\end{center}
\end{minipage}
\begin{minipage}{0.5\textwidth}
\begin{center}
\caption{\small The corank of $M_{t-\ell}(y)$ }
\begin{tabular}{cccccccc}
  \hline\noalign{\smallskip}
Order & $\ell=0$ & $\ell=1$ &$ \ell=2$ \\
\noalign{\smallskip}\hline\noalign{\smallskip}
   t=4  & 22& 10& 3  \\
   t=5  & 40& 22& 10 \\
   t=6  & 65& 40& 22 \\
   \noalign{\smallskip}\hline
\end{tabular}
\end{center}
\end{minipage}
\end{table*}

\begin{table*}[!htb]
\center \caption{\small The  $\alpha_j$ of a reduced basis of $\ker
M_{t-2}(y)$}
\begin{tabular}{ccccccccc}
   \hline\noalign{\smallskip}
Order & $\alpha_1$ &$ \alpha_2$& $\alpha_3$ &$\sum_{j=1}^3 j\alpha_{j}$ \\
\noalign{\smallskip}\hline\noalign{\smallskip}
   t=4  & 0 & 2 & 1 & 0$\times$1+2$\times$2+1$\times$3=7\\
   t=5  & 3 & 3 & 1 & 3$\times$1+3$\times$2+1$\times$3=12\\
   t=6  & 7 & 4 & 1 & 7$\times$1+4$\times$2+1$\times$3=18\\
   \noalign{\smallskip}\hline
\end{tabular}

\end{table*}

For t=4, we have
\[\sum_{j=1}^3 j\alpha_{j}=7, ~{\text{and}}~
\crk \, M_{4-1}-\crk \, M_{4-2}=7.\] Hence,   the condition
(\ref{e,5}) is satisfied. The Pommaret basis  computed by
Algorithm \ref{e,6} for $t=4$ is
\[
\{-x_3+x_2^2, -x_1+x_2 x_3, -x_1 x_2 +x_3^2\}. 
\]

\end{example}

\begin{example}
Consider the ideal $I=\langle h_1, h_2\rangle$, where
\begin{align*}
    h_1 
      &= (x_1-x_2) (x_1+x_2)^2(x_1+x_2^2+x_2),\\
    h_2 
      &=(x_1-x_2)(x_1+x_2)^2 (x_1^2+x_2^2).
\end{align*}
 In this example, $I$ is not a radical ideal.  We
set $\tau=10^{-4}$ and $x_1\tdeg x_2$. The rank and corank sequences
for the truncated moment matrices $M_{t-\ell}(y)$ are shown in Table 16 and 17.
\begin{table*}[!htb]
\begin{minipage}{0.48\textwidth}
\begin{center}
\caption{\small The rank of $M_{t-\ell}(y)$ }
\begin{tabular}{cccccccc}
  \hline\noalign{\smallskip}
Order & $\ell=0$ & $\ell=1$ &$ \ell=2$ \\
\noalign{\smallskip}\hline\noalign{\smallskip}
   t=7 & 15& 13& 11 \\
   t=8 & 17& 15& 13\\
   t=9 & 19& 17& 15\\
   \noalign{\smallskip}\hline
\end{tabular}
\end{center}
\end{minipage}
\begin{minipage}{0.50\textwidth}
\begin{center}
\caption{\small The corank of $M_{t-\ell}(y)$ }
\begin{tabular}{cccccccc}
  \hline\noalign{\smallskip}
Order & $\ell=0$ & $\ell=1$ &$ \ell=2$ \\
\noalign{\smallskip}\hline\noalign{\smallskip}
   t=7  & 21& 15& 10\\
   t=8  & 28& 21& 15 \\
   t=9  & 36& 28& 21\\
   \noalign{\smallskip}\hline
\end{tabular}
\end{center}
\end{minipage}
\end{table*}

\begin{table*}[!htb]
\center \caption{\small The  $\alpha_j$ of a reduced basis of $\ker
M_{t-2}(y)$}
\begin{tabular}{ccccccccc}
   \hline\noalign{\smallskip}
Order & $\alpha_1$ &$ \alpha_2$&$\sum_{j=1}^2 j\alpha_{j}$ \\
\noalign{\smallskip}\hline\noalign{\smallskip}
   t=7  & 3 & 1 & 3$\times$1+1$\times$2=5\\
   t=8  & 4 & 1 & 4$\times$1+1$\times$2=6\\
   t=9  & 5 & 1 & 5$\times$1+1$\times$2=7\\
   \noalign{\smallskip}\hline
\end{tabular}

\end{table*}

For t=7, we have
\[\sum_{j=1}^2 j\alpha_{j}=5, ~{\text{and}}~
\crk \, M_{7-1}-\crk \, M_{7-2}=5.\] Hence,   the condition
(\ref{e,5}) is satisfied. The Pommaret basis  computed by
Algorithm \ref{e,6} for $t=7$ is
\[
\{-x_1^2+x_2^2\}. 
\]
It should be noticed that for this example, if we set tolerance
$\tau < 10^{-4}$, the rank and corank sequences for the truncated
moment matrices $M_{t-\ell}(y)$ will be completely different from
those shown in Table 16 and 17, and we can not get
\{$-x_1^2+x_2^2\}$ as a 
Pommaret basis of $\sqrt[\R]{I}$.
\end{example}

\begin{example}
We  compute  $I(V_{\R}(I)\cap \A)$ for  $I=\langle h_1, h_2\rangle$,
\begin{align*}
    h_1 
        &=(x_1-x_2) (x_1+x_2)(x_1+x_2^2+x_2),\\  
    h_2 
     &=(x_1-x_2)(x_1+x_2)(x_1^2+x_2^2),
\end{align*}
and 
\[ \A=\{(x_1,x_2) \in \R^2 ~|~ x_1\geq 1, ~ x_2\geq 1\}.\]

Let us set $\tau=10^{-8}$ and $x_1\tdeg x_2$, the rank and corank
sequences for the truncated moment matrices $M_{t-\ell}(y)$ with
$y\in\Kk_{t,\A}$ are shown in Table 19 and 20.
\begin{table*}[!htb]
\begin{minipage}{0.48\textwidth}
\begin{center}
\caption{\small The rank of $M_{t-\ell}(y)$ }
\begin{tabular}{cccccccc}
  \hline\noalign{\smallskip}
Order & $\ell=0$ & $\ell=1$ &$ \ell=2$ \\
\noalign{\smallskip}\hline\noalign{\smallskip}
   t=6 &  8& 6&5\\
   t=7 &  9& 7&6\\
   t=8 & 10& 8&7\\
   \noalign{\smallskip}\hline
\end{tabular}
\end{center}
\end{minipage}
\begin{minipage}{0.50\textwidth}
\begin{center}
\caption{\small The corank of $M_{t-\ell}(y)$ }
\begin{tabular}{cccccccc}
  \hline\noalign{\smallskip}
Order & $\ell=0$ & $\ell=1$ &$ \ell=2$ \\
\noalign{\smallskip}\hline\noalign{\smallskip}
   t=6  & 20& 15& 10\\
   t=7  & 27& 21& 15 \\
   t=8  & 35& 28& 21\\
   \noalign{\smallskip}\hline
\end{tabular}
\end{center}
\end{minipage}
\end{table*}

\begin{table*}[!htb]
\center \caption{\small The  $\alpha_j$ of a reduced basis of $\ker
M_{t-2}(y)$}
\begin{tabular}{ccccccccc}
   \hline\noalign{\smallskip}
Order & $\alpha_1$ &$ \alpha_2$&$\sum_{j=1}^2 j\alpha_{j}$ \\
\noalign{\smallskip}\hline\noalign{\smallskip}
   t=6  & 3 & 1 & 3$\times$1+1$\times$2=5\\
   t=7  & 4 & 1 & 4$\times$1+1$\times$2=6\\
   t=8  & 5 & 1 & 5$\times$1+1$\times$2=7\\
   \noalign{\smallskip}\hline
\end{tabular}
\end{table*}

For t=6, we have
\[\sum_{j=1}^2 j\alpha_{j}=5, ~{\text{and}}~
\crk \, M_{6-1}-\crk \, M_{6-2}=5.\] Hence,   the condition
(\ref{e,5}) is satisfied. The Pommaret basis we obtain  by Algorithm
\ref{e,6} for $t=6$ is %
%
%
\[
\{-x_1+x_2\}
\]
 for  $I(V_{\R}(I)\cap \A)$.
\end{example}

\section{Conclusion}\label{conclusions}

In this paper we present a semidefinite  characterization for
computing  a  Pommaret basis of an ideal $J$, where $J$ is generated
by polynomials in the kernel of a truncated moment matrix and
satisfies $I \subseteq J \subseteq I(V_{\R}(I))$. Our approach is
stimulated by the previous work in
\cite{LLR08,LLR09a,LR10,RZ09,Rostalski09,Robin:2006,SRWZ09,Seiler02}.
By combining the geometric involutive theory with the results on
positive semidefinite moment matrices, we introduce a new stopping
condition (\ref{e,5}) for the semidefinite program (\ref{e,11}) and
prove the finite termination of the algorithm in a $\delta$-regular
coordinate system. Although  from the tables in Section
\ref{examples}, we can check that the condition  (\ref{e,5}) can be
satisfied by higher order moment matrices once it is satisfied at
some order, in general, we can not guarantee this property.
Therefore, unlike flat extension conditions proposed by Curto and
Fialkow in \cite{CF96} for finite rank moment matrices, we can not
prove the computed Pommaret basis is an involutive basis of the real
radical ideal. Finally, we wish to mention that results computed by
semidefinite programming and numerical linear algebra are
approximate. Therefore, our condition (\ref{e,5}) can only be
checked with respect to a given tolerance. For improperly chosen
tolerance, we might not be able to give a meaningful 
 answer.

\bigskip
\noindent {\bf Acknowledgement} \,  We are most grateful to Jiawang
Nie for  many constructive remarks to improve the presentation of
the paper. Yue Ma, Chu
 Wang and Lihong Zhi were partially supported
by  a NKBRPC 2011CB302400, and the Chinese National Natural Science
Foundation under grant NSFC 91118001, 60911130369 and 60821002/F02.

\bibliographystyle{plain}

\bibliography{polysolve}
\end{document}